\def\C{{\mathbb C}} 
\def\N{{\mathbb N}}
\def\Q{{\mathbb Q}}
\def\R{{\mathbb R}}
\def\Z{{\mathbb Z}}
\def\scrS{{\mathscr S}}
\def\calE{{\mathcal E}}
\def\bfs{{\mathbf{s}}}
\def\bfw{{\mathbf{w}}}
\def\bfz{{\mathbf{z}}}
\def\Ltilde{\widetilde{L}}
\def\Mtilde{\widetilde{M}}
\def\Lambdatilde{\widetilde{\Lambda}} 
\def\rmD{{\mathrm D}}
\def\rmd{{\mathrm d}}
\def\rme{{\mathrm e}}
\numberwithin{equation}{section}
\newtheorem{theorem}{Theorem}
\newtheorem{corollary}[theorem]{Corollary}
\newtheorem{proposition}[theorem]{Proposition}
\newtheorem{lemma}[theorem]{Lemma}
\newtheorem*{remark}{Remark}
\begin{document} 
 
 \null
 \vskip -3 true cm
 
 \hfill \hfill \hfill \hfill \hfill
 {\footnotesize 
 To appear in the Moscow Journal of Combinatorics and Number Theory

} 
 
 \vskip 3 true cm
 
 \begin{center}
 \LARGE
 \bf
 
 On transcendental entire functions 
 
 with infinitely many derivatives 
 taking 
 
 integer values at 
several points
 
 \large
 \medskip
 by
 
 \medskip
 \sc
 Michel Waldschmidt
 \footnote{{\sc MSC2020}. ---  primary 30D15; secondary 41A58.
 \\
 {\sc Keywords}. --- 
Lidstone series, entire functions, transcendental functions, interpolation, exponential type, Laplace transform, method of the kernel.
 }
 \end{center}

 
 Let $s_0,s_1,\dots,s_{m-1}$ be 
complex numbers and $r_0,\dots,r_{m-1}$ rational integers in the range $0\le r_j\le m-1$. Our first goal is to prove that if an entire function $f$ of sufficiently small exponential type satisfies $f^{(mn+r_j)}(s_j)\in\Z$ for $0\le j\le m-1$ and all sufficiently large $n$, then $f$ is a polynomial. Under suitable assumptions on $s_0,s_1,\dots,s_{m-1}$ and $r_0,\dots,r_{m-1}$, we introduce interpolation polynomials $\Lambda_{nj}$, ($n\ge 0$, $0\le j\le m-1$) satisfying 
$$
\Lambda_{nj}^{(mk+r_\ell)}(s_\ell)=\delta_{j\ell}\delta_{nk}, \quad\hbox{for}\quad n, k\ge 0 \quad\hbox{and}\quad 0\le j, \ell\le m-1
$$
and we show that any entire function $f$ of sufficiently small exponential type has a convergent expansion
$$
f(z)=\sum_{n\ge 0} \sum_{j=0}^{m-1}f^{(mn+r_j)}(s_j)\Lambda_{nj}(z).
$$ 
The case $r_j=j$ for $0\le j\le m-1$ involves successive derivatives $f^{(n)}(w_n)$ of $f$ evaluated at points of a periodic sequence  $\bfw=(w_n)_{n\ge 0}$ of complex numbers, where $w_{mh+j}=s_j$ ($h\ge 0$, $0\le j\le m$). 
More generally, given a bounded  (not necessarily periodic) sequence $\bfw=(w_n)_{n\ge 0}$ of complex numbers, we consider similar interpolation formulae 
$$
f(z)=\sum_{n\ge 0}f^{(n)}(w_n)\Omega_{\bfw,n}(z)
$$ 
involving polynomials $\Omega_{\bfw,n}(z)$ which were introduced by W.~Gontcharoff in 1930. Under suitable assumptions, we show that the hypothesis $f^{(n)}(w_n)\in\Z$ for all sufficiently large $n$ implies that $f$ is a polynomial.


\section{Introduction}\label{S:Introduction}

Given a finite set of points $S$ in the complex plane and an infinite subset $\scrS$ of $S\times \N$, where $\N=\{0,1,2,\dots\}$ is the set of nonnegative integers, we ask for a lower bound on the order of growth of a transcendental entire function $f$ such that $f^{(n)}(s)\in\Z$ for all $(s,n)\in\scrS$. 
In \cite{MW-Lidstone2points}, we discussed the case $S=\{s_0,s_1\}$ using interpolation polynomials of Lidstone, Whittaker and Gontcharoff, together with results of Schoenberg and Macintyre. 

 Here we introduce generalizations of these interpolation polynomials to several points and we deduce lower bounds for the growth of transcendental entire functions with corresponding integral values of their derivatives. We first consider periodic sequences: given complex numbers $s_0,s_1,\dots,s_{m-1}$ and rational integers $r_0,\dots,r_{m-1}$ in the range $0\le r_j\le m-1$, we set 
$$
\scrS=\{(s_j,mn+r_j)\; \mid \; n\ge 0,\; 0\le j\le m-1\};
$$ 
under suitable assumptions, we give a lower bound for the growth order of a transcendental entire function $f$ satisfying $f^{(mn+r_j)}(s_j)\in\Z$ for $0\le j\le m-1$ and all sufficiently large $n$ (Theorem \ref{Theorem:mpointsperiodic}). That some assumption is necessary is obvious from the example $m=2$, $s_0=s_1=r_0=r_1=0$: given any transcendental entire function $g$, say of order $0$, the function $f(z)=zg(z^2)$ is a transcendental entire function of the same order satisfying $f^{(2n)}(s_0)=0$ for all $n\ge 0$. 

Next, we consider a sequence $(w_n)_{n\ge 0}$ of elements in $S$ and we prove that an entire function of sufficiently small exponential type satisfying $f^{(n)}(w_n)\in\Z$ for all sufficiently large $n$ is a polynomial (Theorem \ref{Th:mPointsAlln}(a)). 

In Section \ref{Section:SeveralPointsPeriodic}, we show how to interpolate entire functions of sufficiently small exponential type with respect to periodic subsets of $\{s_0, s_1,\dots,s_{m-1}\} \times \N$. Our approach requires that some determinant $D(s_0, s_1,\dots,s_{m-1})$ (depending also on $r_0,\dots,r_{m-1}$) does not vanish; this assumption cannot be omitted (it could be weakened, but we do not address this issue here).  
 
In Section \ref{S:ComplementarySequences}, we introduce interpolation polynomials attached to a sequence of elements belonging to $\{s_0, s_1,\dots,s_{m-1}\}$. We deduce 
 that if $f$ is an entire function $f$ of sufficiently small exponential type such that, for all sufficiently large $n$, one at least of the $2^m-1$ nonempty products of elements $f^{(n)}(s_0), f^{(n)}(s_1),\dots,f^{(n)}(s_{m-1})$ is in $\Z$, then $f$ is a polynomial (Theorem \ref{Th:mPointsAlln}(b)). 
 
\section{Notation and auxiliary results}\label{S:Notations}
We denote by $\delta_{ij}$ the Kronecker symbol: 
$$
\delta_{ij}=\begin{cases}
1 & \text{ if $i=j$},
\\
0 & \text{ if $i\neq j$,}
\end{cases}
$$
and by $f^{(n)}$ the $n$--th derivative $(\rmd^n /\rmd z^n) f$ of an analytic function $f(z)$. 
 
The order of an entire function $f$ is 
$$
\varrho(f)=\limsup_{r\to\infty} \frac{\log\log |f|_r}{\log r}
\;
\text{ where } 
\;
|f|_r=\sup_{|z|=r}|f(z)|,
$$
and the exponential type is 
$$
\tau(f)= \limsup_{r\to\infty} \frac{ \log |f|_r}{r}\cdotp
$$
For each $z_0\in\C$, we have
\begin{equation}\label{eq:type}
\limsup_{n\to\infty} |f^{(n)}(z_0)|^{1/n}= \tau(f).
\end{equation}

Cauchy's inequalities 
\begin{equation}\label{Equation:CauchyInequality}
\frac{|f^{(n)}(z_0)|}{n!}r^{n} \le |f|_{r+|z_0|},
\end{equation}
 are valid for any entire function $f$ and all $z_0\in\C$, $n\ge 0$ and $r>0$. We will also use Stirling's Formula: for all $N\ge 1$, we have
\begin{equation}\label{Equation:Stirling}
N^N\rme^{-N}\sqrt{2\pi N}
< N! <
N^N \rme^{-N} \sqrt{2\pi N} \rme^{1/(12N)}.	
\end{equation}

For the arithmetical applications, our main assumption on the growth of our functions $f$ is
\begin{equation}\label{eq:maingrowthconditionmpoints}
\limsup_{r\to\infty}\rme^{-r}\sqrt r |f|_r<\frac{1}{\sqrt{2\pi}}
\rme^{-\max\{|s_0|,| s_1|,\dots,|s_{m-1}|\}}.
\end{equation}
This condition arises from the following auxiliary result, based on Cauchy's upper bound for the derivatives and Stirling approximation formula for $n!$ 
 \cite[Proposition 12]{MW-Lidstone2points}: 
 
\begin{proposition}\label{Proposition:Polya}
Let $f$ be an entire function and let $A\ge 0$. Assume 
\begin{equation}\label{eq:maingrowthconditionA}
\limsup_{r\to\infty}\rme^{-r}\sqrt r |f|_r< \frac{ \rme^{-A}}{\sqrt{2\pi} }\cdotp
\end{equation}
Then there exists $n_0>0$ such that, for $n\ge n_0$ and for all $z\in\C$ in the disc $|z|\le A$, we have 
$$
 |f^{(n)}(z)|<1.
$$ 
\end{proposition}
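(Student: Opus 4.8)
The plan is to combine Cauchy's inequalities \eqref{Equation:CauchyInequality} with Stirling's formula \eqref{Equation:Stirling}, optimizing the radius that appears in Cauchy's bound. First I would fix a real number $c$ with
$$
\limsup_{r\to\infty}\rme^{-r}\sqrt r\,|f|_r < c < \frac{\rme^{-A}}{\sqrt{2\pi}},
$$
which is possible by the hypothesis \eqref{eq:maingrowthconditionA}. By definition of the $\limsup$, there is then $r_1>0$ such that $|f|_r< c\,\rme^{r}/\sqrt r$ for all $r\ge r_1$.

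Next, for an integer $n\ge 1$ and for a point $z_0$ with $|z_0|\le A$, I would apply \eqref{Equation:CauchyInequality} with the radius $r=n$; since $r\mapsto |f|_r$ is non-decreasing (maximum modulus principle) and $|z_0|\le A$, this yields
$$
|f^{(n)}(z_0)|\le \frac{n!}{n^n}\,|f|_{n+|z_0|}\le \frac{n!}{n^n}\,|f|_{n+A}.
$$
Whenever $n$ is large enough that $n+A\ge r_1$, I may now insert both the bound $|f|_{n+A}<c\,\rme^{n+A}/\sqrt{n+A}$ and the upper bound $n!<n^n\rme^{-n}\sqrt{2\pi n}\,\rme^{1/(12n)}$ coming from \eqref{Equation:Stirling}, which after simplification gives
$$
|f^{(n)}(z_0)| < c\sqrt{2\pi}\,\rme^{A}\cdot \rme^{1/(12n)}\sqrt{\frac{n}{n+A}}\cdotp
$$

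Finally, the factor $c\sqrt{2\pi}\,\rme^{A}$ is strictly less than $1$ by the choice of $c$, while the remaining factor $\rme^{1/(12n)}\sqrt{n/(n+A)}$ tends to $1$ as $n\to\infty$; hence there is an integer $n_0>0$ (which may be taken at least $\max\{1,\,\lceil r_1-A\rceil\}$) such that the right-hand side is $<1$ for every $n\ge n_0$, uniformly in $z_0$ with $|z_0|\le A$, which is the desired conclusion. I do not expect a genuine obstacle: the only point needing a little care is the choice of the radius $r=n$ in Cauchy's inequality, since it is precisely this value that makes $n!/r^n$ match the Stirling main term $\rme^{-n}\sqrt{2\pi n}$ and thereby cancel the exponential growth of $|f|_{n+A}$ — any radius proportional to $n$ with a different constant would spoil the sharp constant $1/\sqrt{2\pi}$ in \eqref{eq:maingrowthconditionA}.
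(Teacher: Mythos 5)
Your argument is correct, and it is exactly the route the paper indicates: the paper does not reprove Proposition \ref{Proposition:Polya} but cites it from an earlier work as a consequence of Cauchy's inequalities \eqref{Equation:CauchyInequality} combined with Stirling's formula \eqref{Equation:Stirling}, which is precisely what you do (with the natural choice of radius $r=n$ making the constants match). No issues to report.
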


\section{Integer values of derivatives of entire functions}\label{S:main}

\subsection{Periodic sequences}  \label{SS:periodicm}

Let $s_0,s_1,\dots,s_{m-1}$ be complex numbers, not necessarily distinct. We write $\bfs$ for the tuple $(s_0,s_1,\dots,s_{m-1})$. Let $\zeta$ be a primitive $m$-th root of unity and let $r_0,\dots,r_{m-1}$ be $m$ integers satisfying $0\le r_j\le j$ ($0\le j\le m-1$). 
Our main assumption is that the determinant 
$$
\rmD(\bfs)=
\det
\left(
\frac{k!}{(k-r_j)!} s_j^{k-r_j}
\right)_{0\le j,k\le m-1}
$$
does not vanish.
Here, $ a!/(a-b)!$ is understood to be $0$ for $a<b$. This assumption means that the linear map 
\begin{equation}\label{Equation:D(s)not0}
\begin{matrix}
\C[z]_{\le m-1}&\longrightarrow& \C^m \qquad
\\
L(z)&\longmapsto& \bigl(L^{(r_j)}(s_j)\bigr)_{0\le j\le m-1}
\end{matrix}
\end{equation}
is an isomorphism of $\C$--vector spaces, $\C[z]_{\le m-1}$ being the space of polynomials of degree $\le m-1$. 

 For $t\in\C$, consider the $m\times m$ matrix
$$
M(t)=\left(\zeta^{kr_\ell} \rme^{\zeta^k t s_\ell}\right)_{0\le k, \ell\le m-1}
$$ 
and its determinant $\Delta(t)$:
$$
\Delta(t) =
\det
\begin{pmatrix}
\rme^{ts_0}&\rme^{ts_1}&\cdots&\rme^{t s_{m-1}}
\\
\zeta^{r_0} \rme^{\zeta ts_0}&\zeta^{r_1}\rme^{\zeta ts_1}&\cdots& \zeta^{r_{m-1}} \rme^{\zeta t s_{m-1}}
\\
\vdots&\vdots&\ddots&\vdots
\\
\zeta^{(m-1)r_0} \rme^{\zeta^{m-1} ts_0}&\zeta^{(m-1)r_1}\rme^{\zeta^{m-1} ts_1}&\cdots&\zeta^{(m-1)r_{m-1}}\rme^{\zeta^{m-1} t s_{m-1}}
\\
\end{pmatrix}.
$$
We will show (Lemma \ref{Lemma:Deltanot0}) that the exponential polynomial $\Delta(t)$ is not the zero function. 
%


\begin{theorem}\label{Theorem:mpointsperiodic}
Assume $\rmD(\bfs)\not=0$. 
Let $\tau>0$ be such that $\Delta(t)$ does not vanish for $0<|t|<\tau$.  
Let $f$ be an entire function of exponential type $<\tau$ which satisfies \eqref{eq:maingrowthconditionmpoints}
and also, for each $n$ sufficiently large, 
$$
f^{(mn+r_j)}(s_j)\in\Z \text{ for } j=0,\dots,m-1.
$$
Then $f$ is a polynomial. 
\end{theorem}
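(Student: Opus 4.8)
The plan is to establish, under the hypothesis $\mathrm{D}(\bfs)\neq 0$, the existence of interpolation polynomials $\Lambda_{nj}$ with $\Lambda_{nj}^{(mk+r_\ell)}(s_\ell)=\delta_{j\ell}\delta_{nk}$, together with a convergent expansion $f(z)=\sum_{n\ge 0}\sum_{j=0}^{m-1} f^{(mn+r_j)}(s_j)\Lambda_{nj}(z)$ valid for every entire $f$ of exponential type $<\tau$ satisfying the growth condition \eqref{eq:maingrowthconditionmpoints}. This is exactly the interpolation machinery announced in the introduction and developed in Section \ref{Section:SeveralPointsPeriodic}; once it is in hand, the arithmetic argument is short. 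First I would fix such an $f$ and apply Proposition \ref{Proposition:Polya} with $A=\max\{|s_0|,\dots,|s_{m-1}|\}$: condition \eqref{eq:maingrowthconditionmpoints} is precisely \eqref{eq:maingrowthconditionA} for this $A$, so there is an $n_0$ with $|f^{(n)}(z)|<1$ for all $n\ge n_0$ and all $|z|\le A$. In particular $|f^{(mn+r_j)}(s_j)|<1$ for $mn+r_j\ge n_0$, and combined with the hypothesis $f^{(mn+r_j)}(s_j)\in\Z$ this forces $f^{(mn+r_j)}(s_j)=0$ for all $n\ge n_1$ (some $n_1$) and all $j$.

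Next I would feed this vanishing into the interpolation expansion. Since the coefficients $f^{(mn+r_j)}(s_j)$ vanish for $n\ge n_1$, the expansion collapses to the finite sum
$$
f(z)=\sum_{n=0}^{n_1-1}\sum_{j=0}^{m-1} f^{(mn+r_j)}(s_j)\,\Lambda_{nj}(z),
$$
which is a polynomial, since each $\Lambda_{nj}$ is a polynomial. This gives the conclusion. The role of the hypothesis on $\tau$ (namely that $\Delta(t)\neq 0$ for $0<|t|<\tau$, together with $\Delta$ not identically zero from Lemma \ref{Lemma:Deltanot0}) is exactly to guarantee convergence of the Lidstone-type series for functions of exponential type $<\tau$: the generating kernel whose poles are the zeros of $\Delta$ controls the growth of the $\Lambda_{nj}$, and keeping $\tau$ below the first nonzero zero of $\Delta$ makes the method of the kernel / Laplace transform representation converge.

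The main obstacle is therefore not the arithmetic step (which is a routine application of Proposition \ref{Proposition:Polya}) but the construction and convergence of the interpolation expansion: one must build the $\Lambda_{nj}$ explicitly, verify the biorthogonality relations $\Lambda_{nj}^{(mk+r_\ell)}(s_\ell)=\delta_{j\ell}\delta_{nk}$, obtain good upper bounds for $|\Lambda_{nj}|_r$ (via a contour-integral/Laplace-transform representation with kernel built from $M(t)^{-1}$, whose denominator is $\Delta(t)$), and then prove that $\sum_{n,j} f^{(mn+r_j)}(s_j)\Lambda_{nj}(z)$ converges locally uniformly to $f$ whenever $\tau(f)<\tau$ and \eqref{eq:maingrowthconditionmpoints} holds. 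The non-vanishing of $\mathrm{D}(\bfs)$ is what makes $M(t)$ invertible near $t=0$ and pins down the $\Lambda_{nj}$ uniquely; the condition $\tau(f)<\tau$ bounds the error term in the method of the kernel by a quantity tending to $0$. All of this is the content of Section \ref{Section:SeveralPointsPeriodic}, which I would invoke; the proof of the theorem itself is then the two short paragraphs above.
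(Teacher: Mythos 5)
Your proposal is correct and follows essentially the same route as the paper: Proposition \ref{Proposition:Polya} (with $A=\max_j|s_j|$) turns the integrality hypothesis into the vanishing $f^{(mn+r_j)}(s_j)=0$ for large $n$, and then the interpolation expansion of Section \ref{Section:SeveralPointsPeriodic} (Proposition \ref{Proposition:expansion}, i.e.\ Corollary \ref{Corollary:PeriodicZeroes}) shows $f$ is a polynomial. Your description of where the real work lies --- constructing the $\Lambda_{nj}$, bounding them via the kernel $M(t)^{-1}$ with denominator $\Delta(t)$, and proving convergence by the Laplace-transform method --- matches the paper's development exactly.
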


In the case $m=1$, we can take $\tau=1$ and the assumption that the exponential type is $<1$ can be replaced by the weaker condition \eqref{eq:maingrowthconditionA} with $A=0$, according to a classical result of P\'olya on Hurwitz functions; see \cite[\S 2]{MW-Lidstone2points}. 

Let us give two further examples. Proofs will be given in Section \ref{SS:TheDeterminant}.

Our first example is with $r_0=r_1=\dots=r_{m-1}=0$. In this case, the assumption $\rmD(\bfs)\not=0$ is satisfied if and only if $s_0,s_1,\dots,s_{m-1}$ are pairwise distinct (Section \ref{SS:TheDeterminant} Example 1). 
 
\begin{corollary}\label{corollary:mcongru0}
Assume that $s_0,s_1,\dots,s_{m-1}$ are pairwise distinct.
An entire function of sufficiently small exponential type, satisfying 
$$
f^{(mn)}(s_j)\in\Z 
$$
for $j=0,\dots,m-1$ and for all sufficiently large $n$, is a polynomial. 
\end{corollary}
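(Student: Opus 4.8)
The plan is to obtain the corollary as the special case $r_0=r_1=\cdots=r_{m-1}=0$ of Theorem~\ref{Theorem:mpointsperiodic}, so that the task reduces to verifying the two hypotheses of that theorem ($\rmD(\bfs)\ne 0$, and the existence of a suitable $\tau$) together with spelling out the meaning of ``sufficiently small exponential type''. First I would compute $\rmD(\bfs)$ when every $r_j=0$: then $k!/(k-r_j)!=1$ and $s_j^{k-r_j}=s_j^k$, so the matrix is $(s_j^k)_{0\le j,k\le m-1}$ and $\rmD(\bfs)$ is the Vandermonde determinant $\prod_{0\le i<j\le m-1}(s_j-s_i)$. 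Hence $\rmD(\bfs)\ne 0$ is equivalent to $s_0,\dots,s_{m-1}$ being pairwise distinct, which is precisely our assumption. (Equivalently, the linear map \eqref{Equation:D(s)not0} is here $L\mapsto(L(s_j))_{0\le j\le m-1}$, i.e.\ evaluation at $m$ distinct points, an isomorphism of $\C[z]_{\le m-1}$ onto $\C^m$ by Lagrange interpolation.)

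Next I would produce the radius $\tau>0$ required by the theorem. With $r_\ell=0$ the matrix $M(t)$ becomes $\bigl(\rme^{\zeta^k t s_\ell}\bigr)_{0\le k,\ell\le m-1}$, so $\Delta(t)=\det\bigl(\rme^{\zeta^k ts_\ell}\bigr)_{0\le k,\ell\le m-1}$. By Lemma~\ref{Lemma:Deltanot0} this exponential polynomial is not identically zero; being entire, its zeros form a discrete set, and therefore there is a number $\tau>0$ with $\Delta(t)\ne 0$ for $0<|t|<\tau$. One cannot do better than a punctured disc here: for $m\ge 2$ the matrix $M(0)$ is the all-ones matrix, so $\Delta(0)=0$, which is exactly why the hypothesis of Theorem~\ref{Theorem:mpointsperiodic} is stated for $0<|t|<\tau$.

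It remains to read off which functions qualify. It suffices to require $\tau(f)<\min\{1,\tau\}$: then $f$ has exponential type $<\tau$, and, choosing $\varepsilon>0$ with $\tau(f)+\varepsilon<1$ and using $|f|_r\le\rme^{(\tau(f)+\varepsilon)r}$ for $r$ large, one gets $\rme^{-r}\sqrt r\,|f|_r\le\sqrt r\,\rme^{-(1-\tau(f)-\varepsilon)r}\to 0$, so \eqref{eq:maingrowthconditionmpoints} holds a fortiori, its right-hand side being a fixed positive constant. Since the arithmetic hypothesis $f^{(mn)}(s_j)\in\Z$ for all sufficiently large $n$ and all $j$ is exactly the $r_j=0$ instance of the hypothesis in Theorem~\ref{Theorem:mpointsperiodic}, that theorem applies and $f$ is a polynomial.

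I do not expect a serious obstacle once Theorem~\ref{Theorem:mpointsperiodic} and Lemma~\ref{Lemma:Deltanot0} are available: the proof is essentially a specialization together with the identification of $\rmD(\bfs)$ as a Vandermonde determinant. The one point worth presenting carefully is the passage from ``small exponential type'' to the concrete growth condition \eqref{eq:maingrowthconditionmpoints}, which is dispatched by the elementary estimate above; the genuinely analytic content — the non-vanishing of $\Delta$ near the origin and the interpolation/kernel argument underlying the theorem — has already been carried out upstream.
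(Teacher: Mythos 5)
Your proposal is correct and follows exactly the paper's route: the paper proves this corollary by identifying $\rmD(\bfs)$ with the Vandermonde determinant $\prod_{0\le j<\ell\le m-1}(s_\ell-s_j)$ (Example 1 of Section \ref{SS:TheDeterminant}) and then invoking Theorem \ref{Theorem:mpointsperiodic}, with the existence of $\tau$ guaranteed by Lemma \ref{Lemma:Deltanot0} (or, as you note, by discreteness of the zeros of the nonzero entire function $\Delta$). Your explicit verification that $\tau(f)<\min\{1,\tau\}$ forces \eqref{eq:maingrowthconditionmpoints} is a correct and welcome piece of bookkeeping that the paper leaves implicit.
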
 

 For $m=2$ (Lidstone interpolation), 
 with $f^{(2n)}(s_0)\in\Z$ and $f^{(2n)}(s_1)\in\Z$, 
 Corollary \ref{corollary:mcongru0} follows also from \cite[Corollary 2]{MW-Lidstone2points}, 
where 
 the assumption on the exponential type $\tau(f)$ of $f$ is 
 $$
 \tau(f)<\min\{1, \pi/|s_0-s_1|\},
 $$
and this assumption is best possible. Indeed, 
 
 \begin{itemize}
 \item
 The function 
 $$
 f(z)=\frac{\sinh(z- s_1)}{\sinh(s_0- s_1)}
 $$ 
has exponential type $1$ and satisfies 
$f^{(2n)}(s_0)=1$ and $f^{(2n)}( s_1)=0$ for all $n\ge 0$. 
 \item
 The function 
$$
f(z)=\sin\left( \pi\frac{z-s_0}{ s_1-s_0}\right)
$$ 
has exponential type $\pi/|s_1-s_0|$ and satisfies $f^{(2n)}(s_0)=f^{(2n)}( s_1)=0$ for all $n\ge 0$.
\end{itemize}

Our second example is $r_j=j$ for $j=0,1,\dots,m-1$. The assumption $\rmD(\bfs)\not=0$ is always satisfied (Section \ref{SS:TheDeterminant} Example 2). 

\begin{corollary}\label{corollary:mcongru01m-1}
An entire function of sufficiently small exponential type satisfying 
$$
f^{(mn+j)}(s_j)\in\Z 
$$
for $j=0,\dots,m-1$ and for all sufficiently large $n$ is a polynomial. 
\end{corollary}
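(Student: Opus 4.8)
The plan is to obtain Corollary \ref{corollary:mcongru01m-1} as the special case $(r_0,r_1,\dots,r_{m-1})=(0,1,\dots,m-1)$ of Theorem \ref{Theorem:mpointsperiodic}. This choice of exponents satisfies the constraint $0\le r_j\le j$ imposed in Section \ref{SS:periodicm}, so it suffices to do two things: (i) verify the hypothesis $\rmD(\bfs)\neq 0$, which here must hold for every tuple $\bfs$, whether or not its entries are distinct; and (ii) spell out what ``sufficiently small exponential type'' means in terms of the threshold $\tau$ occurring in the theorem.

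For step (i) I would just read off the shape of the defining matrix. With $r_j=j$ its $(j,k)$ entry is $\frac{k!}{(k-j)!}s_j^{k-j}$, which, under the convention $a!/(a-b)!=0$ for $a<b$, vanishes whenever $k<j$; hence the matrix is upper triangular, with $j$-th diagonal entry equal to $j!$. Therefore
$$
\rmD(\bfs)=\prod_{j=0}^{m-1}j!\,,
$$
a positive integer, so $\rmD(\bfs)\neq 0$ unconditionally. (This is Example 2 of Section \ref{SS:TheDeterminant}; in contrast with Corollary \ref{corollary:mcongru0}, no hypothesis at all on $s_0,\dots,s_{m-1}$ is needed.)

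For step (ii), recall that by Lemma \ref{Lemma:Deltanot0} the exponential polynomial $\Delta(t)$ attached to $\bfs$ and $(0,1,\dots,m-1)$ is not the zero function; being entire, its zero set is discrete, so there exists $\tau=\tau(\bfs)>0$ with $\Delta(t)\neq 0$ for $0<|t|<\tau$. (Concretely, $\Delta(0)=\det(\zeta^{k\ell})_{0\le k,\ell\le m-1}$ is a Vandermonde determinant in the pairwise distinct numbers $1,\zeta,\dots,\zeta^{m-1}$, hence nonzero.) I then read ``sufficiently small exponential type'' as the conjunction of $\tau(f)<\tau(\bfs)$ and the growth bound \eqref{eq:maingrowthconditionmpoints}; note that $\tau(f)<1$ already forces $\rme^{-r}\sqrt r\,|f|_r\to 0$, so \eqref{eq:maingrowthconditionmpoints} is automatic as soon as $\tau(f)<\min\{1,\tau(\bfs)\}$. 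With these hypotheses in place, Theorem \ref{Theorem:mpointsperiodic} applies directly and gives that $f$ is a polynomial.

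The only computation of substance is the triangularity observation in step (i), and that is immediate; everything else is bookkeeping of the two smallness requirements on $\tau(f)$. I do not expect any serious obstacle here, since all the analytic work is already encapsulated in Theorem \ref{Theorem:mpointsperiodic} and Lemma \ref{Lemma:Deltanot0}.
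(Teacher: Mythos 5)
Your proposal is correct and follows the paper's own route exactly: the paper derives Corollary \ref{corollary:mcongru01m-1} from Theorem \ref{Theorem:mpointsperiodic} by observing (Example 2 of Section \ref{SS:TheDeterminant}) that for $r_j=j$ the matrix defining $\rmD(\bfs)$ is upper triangular with diagonal entries $j!$, so $\rmD(\bfs)=\prod_{j=0}^{m-1}j!\neq 0$ unconditionally, and the existence of a suitable $\tau>0$ comes from $\Delta(t)$ being a nonzero entire function (Lemma \ref{Lemma:Deltanot0}; the paper even records your Vandermonde computation of $\Delta(0)$ in its Gontcharoff example). Your additional remark that $\tau(f)<1$ makes \eqref{eq:maingrowthconditionmpoints} automatic is accurate bookkeeping and consistent with how the paper packages "sufficiently small exponential type."
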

 
 In the case $m=2$ (Whittaker interpolation), with 
 $f^{(2n+1)}(s_0)\in\Z$ and $f^{(2n)}(s_1)\in\Z$, Corollary \ref{corollary:mcongru01m-1} also follows from \cite[Corollary 6]{MW-Lidstone2points} 
(after permutation of $s_0$ and $s_1$), where
the assumption is 
 $$
 \tau(f)<\min\left\{1, \frac{\pi}{2|s_0-s_1|}\right\},
 $$ 
and this assumption is best possible. Indeed, 
 
 \begin{itemize}
 \item The function 
 $$
f(z)= \frac{\sinh(z- s_0)}{\cosh(s_1- s_0)}
$$  
has exponential type $1$ and satisfies 
$f^{(2n)}( s_0)=0$ and $f^{(2n+1)}(s_1)=1$ for all $n\ge 0$.
 
 \item
 The function 
 $$
f(z)=\cos\left(\frac{\pi}{2}\cdot \frac{z-s_1}{ s_1-s_0}
\right)
$$
has exponential type $ \pi/(2| s_1-s_0|)$ and satisfies $f^{(2n)}(s_0)=f^{(2n+1)}( s_1)=0$ for all $n\ge 0$.
\end{itemize}

\subsection{Sequence of derivatives at finitely many points}
 
The next result deals with a situation more general than Corollary \ref{corollary:mcongru01m-1}. 
 
\begin{theorem}\label{Th:mPointsAlln}
Let $A>0$, let $f$ be an entire function satisfying 
\eqref{eq:maingrowthconditionA} and let the exponential type $\tau(f)$ of $f$ satisfy
$$
\tau(f)< \frac {\log 2} A\cdotp
$$
(a)
Assume that for all sufficiently large integers $n$, there exists $w_n\in\C$ with $|w_n|<A$ such that $f^{(n)}(w_n)\in\Z$. Then $f$ is a polynomial. 
\\
(b)
Let $s_0, s_1,\dots,s_{m-1}$ be $m$ 
complex numbers, not necessarily distinct, satisfying 
$$
\max_{0\le j\le m-1}|s_j|<A.
$$
Assume that, for all sufficiently large $n$, there exists a nonempty subset $I_n$ of $\{0,1,\dots,m-1\}$ such that the product
$$
\prod_{j\in I_n} f^{(n)}(s_j) 
$$
 is in $\Z$. 
Then $f$ is a polynomial. 
\end{theorem}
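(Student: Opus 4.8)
The plan is to prove part (a) first by a contradiction/boundedness argument, then derive part (b) from part (a) by a pigeonhole argument on the finitely many subsets. Suppose $f$ is not a polynomial. The condition \eqref{eq:maingrowthconditionA} together with Proposition~\ref{Proposition:Polya} gives an $n_0$ such that $|f^{(n)}(z)|<1$ for all $n\ge n_0$ and all $|z|\le A$. In particular, for all sufficiently large $n$ we have $|f^{(n)}(w_n)|<1$, and since $f^{(n)}(w_n)\in\Z$, this forces $f^{(n)}(w_n)=0$. The idea is then to use this to bound $|f^{(n)}(z)|$ on a disc by a geometrically small quantity, exploiting the hypothesis $\tau(f)<(\log 2)/A$, and iterate.

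First I would fix a large $n$ with $f^{(n)}(w_n)=0$. Writing $g=f^{(n)}$, Taylor expansion around $w_n$ gives $g(z)=\sum_{k\ge 1} g^{(k)}(w_n)(z-w_n)^k/k!$, so for $|z|\le A$ (hence $|z-w_n|\le 2A$) one gets, via Cauchy's inequalities \eqref{Equation:CauchyInequality} applied at $w_n$,
$$
|f^{(n)}(z)| = |g(z)| \le \sum_{k\ge 1}\frac{|f^{(n+k)}(w_n)|}{k!}(2A)^k.
$$
Now iterate: each $f^{(n+k)}(w_{n+k})=0$ as well, but more efficiently one uses \eqref{eq:type}, namely $\limsup_n |f^{(n)}(z_0)|^{1/n}=\tau(f)$. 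The cleanest route: for any $\varepsilon>0$ there is $C_\varepsilon$ with $|f^{(n+k)}(w_n)|\le C_\varepsilon(\tau(f)+\varepsilon)^{n+k}$ (uniformly over the bounded set of centers, using Cauchy on a fixed large disc), so
$$
|f^{(n)}(z)|\le C_\varepsilon(\tau(f)+\varepsilon)^n \sum_{k\ge 1}\frac{(2A(\tau(f)+\varepsilon))^k}{k!} \le C_\varepsilon(\tau(f)+\varepsilon)^n\, e^{2A(\tau(f)+\varepsilon)}.
$$
Hmm — this alone only recovers a bound of the shape $\tau(f)^n$, which is not yet $<1$. The real gain must come from combining the vanishing at $w_n$ with the vanishing at $w_{n+1}, w_{n+2},\dots$: each successive derivative vanishing at a nearby point should let one pull out extra powers. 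Concretely, I expect one proves by induction on $p$ a bound like $|f^{(n)}(z)|\le c_p \cdot (2A)^p/p! \cdot \sup_{k\le p,\,|\zeta|\le A}|f^{(n+p)}(\zeta)|$ wait — rather: repeatedly applying "vanishes at $w_n$ so $g$ is divisible by $(z-w_n)$" in the style of Gontcharoff's majorant, one shows
$$
|f^{(n)}(z)|\le \frac{(2A)^p}{p!}\sup_{|\zeta|\le A}|f^{(n+p)}(\zeta)| \quad\text{for all }p\ge 1.
$$
Then letting $p\to\infty$ with the uniform growth bound $\sup_{|\zeta|\le A}|f^{(n+p)}(\zeta)|\le C(\tau(f)+\varepsilon)^{n+p}$ (possible since $\tau(f)+\varepsilon < (\log 2)/A$ can be arranged, so $2A(\tau(f)+\varepsilon)<2\log 2$, but actually we need $(2A)^p(\tau(f)+\varepsilon)^p/p!\to 0$ which is automatic), we would get $f^{(n)}(z)=0$ identically for $n$ large, hence $f$ is a polynomial — contradiction. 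I would need to install the factorial-gain induction carefully; that is where the role of $\tau(f)<(\log 2)/A$ really enters, presumably to make a comparison series converge, e.g. $\sum_p (2A)^p(\tau(f)+\varepsilon)^p/p!$ or a variant with $2^p$ in place of $p!$ coming from iterated halving of a Taylor remainder.

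For part (b): for each large $n$ there is a nonempty $I_n\subseteq\{0,\dots,m-1\}$ with $\prod_{j\in I_n}f^{(n)}(s_j)\in\Z$. By Proposition~\ref{Proposition:Polya} and $\max_j|s_j|<A$, we have $|f^{(n)}(s_j)|<1$ for all $j$ and all $n\ge n_0$, so $\bigl|\prod_{j\in I_n}f^{(n)}(s_j)\bigr|<1$, forcing the product to be $0$; hence for each large $n$ there is at least one index $j=j(n)\in\{0,\dots,m-1\}$ with $f^{(n)}(s_j)=0$. Setting $w_n=s_{j(n)}$ (with $|w_n|<A$) we have $f^{(n)}(w_n)=0\in\Z$ for all large $n$, so part (a) applies and $f$ is a polynomial. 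The main obstacle throughout is establishing the factorial-type majorant in part (a) that turns "all these derivatives vanish at nearby points" into "$f^{(n)}\equiv 0$"; once that mechanism is set up — it is essentially the convergence of a Gontcharoff-type interpolation series, which the hypothesis $\tau(f)<(\log 2)/A$ is tailored to guarantee — part (b) is an immediate corollary via the pigeonhole step above.
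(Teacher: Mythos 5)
Your reduction steps are exactly the paper's: Proposition~\ref{Proposition:Polya} forces $f^{(n)}(w_n)=0$ for large $n$ in part (a), and in part (b) the product is an integer of modulus $<1$, hence zero, hence some factor $f^{(n)}(s_{j(n)})$ vanishes, reducing (b) to (a). But the analytic core of your part (a) has a genuine gap: the claimed majorant $|f^{(n)}(z)|\le \frac{(2A)^p}{p!}\sup_{|\zeta|\le A}|f^{(n+p)}(\zeta)|$ is false. Writing the iterated integral $f^{(n)}(z)=\int_{w_n}^z\rmd t_1\int_{w_{n+1}}^{t_1}\rmd t_2\cdots\int_{w_{n+p-1}}^{t_{p-1}}f^{(n+p)}(t_p)\,\rmd t_p$, each path has length $\le 2A$ but the lower endpoints $w_{n+k}$ move, so the integration domain does not shrink like $1/p!$; one only gets $(2A)^p$, not $(2A)^p/p!$. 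A concrete refutation: take $f(z)=\sin(\pi z/2)$ with $w_{2n}=0$, $w_{2n+1}=1$ and $A$ slightly larger than $1$. Then $f^{(n)}(w_n)=0$ for all $n$ and $f$ has finite exponential type $\pi/2$, so your factorial bound would force $f\equiv 0$ --- yet $f\not\equiv 0$. Any mechanism with a factorial gain would prove the conclusion for \emph{every} finite exponential type, which this example shows is impossible; the constant $\log 2$ in the hypothesis is essential (the sharp constant is the Whittaker constant, only slightly larger). Note also that the correct crude bound $(2A)^p\sup|f^{(n+p)}|$ does yield a valid proof, but only under the stronger hypothesis $\tau(f)<1/(2A)$, which falls short of $\tau(f)<(\log 2)/A$.

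What actually closes the gap --- and what the paper does --- is the Abel--Gontcharoff machinery of Section~\ref{S:ComplementarySequences}: the polynomials $\Omega_{n;\bfw}$ satisfy the \emph{geometric} (not factorial) bound $|\Omega_{n;\bfw}|_r\le(\kappa r)^n$ for any $\kappa>1/\log 2$ and large $n$ (Proposition~\ref{Proposition:borneOmega}, where the recursion for the majorant sequence $c_n$ has generating function with a pole at $\log 2$, which is precisely where the constant comes from). Combined with $\tau(f)<(\log 2)/A$ this makes the expansion $f(z)=\sum_{n\ge 0}f^{(n)}(w_n)\Omega_{n;\bfw}(z)$ converge on a disc $|z|\le r$ with $A\le r<(\log 2)/\tau(f)$ (Proposition~\ref{Proposition:InterpolationSequence}); since all but finitely many coefficients $f^{(n)}(w_n)$ vanish, $f$ agrees with a polynomial on that disc, hence everywhere (Corollary~\ref{Corollary:fnwn=0}). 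You correctly sensed that ``the convergence of a Gontcharoff-type interpolation series'' is the relevant mechanism, but the estimate you propose to drive it is not the right one and cannot be repaired to give a $1/p!$ gain.
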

 
The case $m=2$ in part (b) of Theorem \ref{Th:mPointsAlln} is \cite[Theorem 8]{MW-Lidstone2points}.
 
\subsection{Content}

 In Section \ref{Section:SeveralPointsPeriodic} we deal with periodic subsets of 
$S\times \N$: we generalize the construction of Lidstone polynomials to several points and we prove Theorem \ref{Theorem:mpointsperiodic} and Corollaries \ref{corollary:mcongru0} and \ref{corollary:mcongru01m-1}. 
In Section \ref{S:ComplementarySequences}, we introduce and study interpolation polynomials associated with a sequence of elements in $S$ and we prove Theorem \ref{Th:mPointsAlln}. 
 
\section{Periodic case}\label{Section:SeveralPointsPeriodic}

Let $s_0,s_1,\dots,s_{m-1}$ be distinct complex numbers and $r_0,\dots,r_{m-1}$ rational integers satisfying $0\le r_0\le r_1\le \cdots \le r_{m-1}\le m-1$.

\subsection{The determinant $\rmD(\bfz)$ -- proofs of Corollaries \ref{corollary:mcongru0} and \ref{corollary:mcongru01m-1}}\label{SS:TheDeterminant}

Let $z_0,z_1,\dots,z_{m-1}$ be independent variables. Write $\bfz$ for $(z_0,z_1,\dots,z_{m-1})$. Let $K$ be the field $
\Q(z_0,z_1,\dots,z_{m-1})$ and $\rmD(\bfz)$ be the determinant 
$$
\det
\left(
\frac{k!}{(k-r_j)!} z_j^{k-r_j}
\right)_{0\le j,k\le m-1} \in \Q[\bfz]\subset K.
$$
Recall $a!/(a-b)!=0$ for $a<b$.
 
For $j=0,1,\dots,m-1$, the row vector 
\begin{align}\notag
 v_j&=\left(
\frac{k!}{(k-r_j)!} z_j^{k-r_j}
\right)_{k=0,1,\dots, m-1} 
\\
\notag
&=
\left(0,0,\dots,0,r_j!,\frac{(r_j+1)!}{1!} z_j,\frac{(r_j+2)!}{2!} z_j^2,\dots,\frac{(m-1)!}{(m-1-r_j)!} z_j^{m-1-r_j} \right)
\end{align} 
belongs to $\{0\}^{r_j}\times K^{m-r_j}$. 
If $r_j>j$ for some $j\in\{0,1,\dots,m-1\}$, then the $m-j$ vectors $v_j,v_{j+1},\dots,v_{m-1}$ all belong to the subspace $\{0\}^{j+1}\times K^{m-j-1}$ of $K^m$, the dimension of which is $m-j-1$; hence the determinant $\rmD(\bfz)$ vanishes. 

Assume $r_j\le j$ for $0\le j\le m-1$. For the degree given by the lexicographic order, the leading term of the polynomial $\rmD(\bfz)$ is  the product of the elements on the diagonal. The degree in $z_j$ of $\rmD(\bfz)$ is $\le m-1-r_j$. 
For $k=0,1,\dots,m-1$, define 
$$
\calE(k)=\{(i,j)\; \mid \; 0\le i<j\le m-1,\; r_i=r_j\}.
$$ 
In the ring $\Q[z_0,z_1,\dots,z_{m-1}]$, the polynomial $\rmD(\bfz)$ is divisible by 
$$
\prod_{(i,j)\in \calE(k)} (z_j-z_i). 
$$
If there is no extra nonconstant factor, the only zeros of $\rmD(\bfz)$ are given by $z_i=z_j$ with $r_i=r_j$ and $i<j$. But extra factors may occur. 
 
 \medskip
\noindent
{\bf Examples}
\\
(1) \cite{MR1501639}, 
quoted by \cite[\S 3]{MR0059366} 
and \cite{MR0072947}: 
$$
r_0=r_1=\cdots=r_{m-1}=0.
$$ 
The Vandermonde determinant
$$
D(\bfs)=
\det
\left( s_j^k
\right)_{0\le j,k\le m-1}
=
\det
\left(
\begin{matrix}
1& s_0& s_0^2 & \cdots&s_0^{m-1}
\\
1& s_1& s_1^2 & \cdots&s_1^{m-1}
\\
1& s_2& s_2^2 & \cdots&s_2^{m-1}
\\
\vdots& \vdots& \vdots &\ddots&\vdots
\\
1& s_{m-1}& s_{m-1}^2 & \cdots&s_{m-1}^{m-1}
\\
\end{matrix}
\right)
=
\prod_{0\le j<\ell\le m-1}(s_\ell-s_j)
$$
does not vanish if and only if $s_0,s_1,\dots,s_{m-1}$ are pairwise distinct. 
\\
(2) \cite{zbMATH02563461}, 
quoted by \cite[\S 4]{MR0059366} 
and \cite{MR0072947}: 
$$
r_j=j \quad\hbox{for}\quad j=0,1,\dots,m-1.
$$ 
Then
\begin{align*}
D(\bfs)&=
\det
\left(
\frac{k!}{(k-j)!} s_j^{k-j}
\right)_{0\le j,k\le m-1}
\\
&=
\det
\left(
\begin{matrix}
1& s_0& s_0^2 &s_0^3& \cdots&s_0^{m-2}&s_0^{m-1}
\\
0& 1& 2s_1 &3s_1^2& \cdots&(m-2)s_1^{m-3}&(m-1)s_1^{m-2}
\\
0& 0& 2 &6s_2& \cdots&(m-2)(m-3)s_3^{m-4}&(m-1)(m-2)s_2^{m-3}
\\
\vdots& \vdots&\vdots& \vdots &\ddots&\vdots&\vdots
\\
0& 0& 0&0& \cdots&(m-2)!& (m-1)!s_{m-1}
\\
0& 0& 0&0& \cdots&0& (m-1)!
\\
\end{matrix}
\right)
=
\prod_{j=0}^{m-1} j!
\end{align*}
does not vanish.
\\
(3)
Take $m=3$, $r_0=r_1=0$, $r_2=1$. Then 
$$
\rmD(z_0,z_1,z_2)=
\left|
\begin{matrix}
1&z_0&z_0^2
\\1&z_1&z_1^2
\\0&1&2z_2
\\
\end{matrix}
\right|
=
(z_1-z_0)(2z_2-z_1-z_0).
$$
A polynomial of degree $2$ vanishing at $s$ and $-s$ with $s\not=0$ has a zero derivative at the origin. 
For the study of entire functions $f$ satisfying 
$$
f^{(3n)}(s_0)\in\Z, \; f^{(3n)}(s_1)\in\Z, \; f^{(3n+1)}(s_2)\in\Z \quad\hbox{for}\quad n\ge 0,
$$ 
the assumption $\rmD(\bfs)\not=0$ amounts to $2s_2\not=s_1+s_0$.

\subsection{Interpolation polynomials}
The following interpolation polynomials generalize the sequences of polynomials introduced by Lidstone, Whittaker, Poritsky, Gontcharoff and others. 

\begin{proposition}\label{Prop:InterpolationPolynomials}
Assume $\rmD(\bfs)\not=0$. Then there exists a unique family of polynomials $(\Lambda_{nj}(z))_{n\ge 0,0\le j\le m-1}$ satisfying 
\begin{equation}\label{Equation:Lambda}
\Lambda_{nj}^{(mk+r_\ell)}(s_\ell)=\delta_{j\ell}\delta_{nk}, \quad\hbox{for}\quad n, k\ge 0 \quad\hbox{and}\quad 0\le j, \ell\le m-1.
\end{equation}
For $n\ge 0 $ and $0\le j\le m-1$ the polynomial $\Lambda_{nj}$ has degree $\le mn+m-1$. 
\end{proposition}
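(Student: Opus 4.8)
The plan is to argue by induction on $n$, exploiting the fact that differentiating $m$ times lowers the index $n$ by one, while the base case at each level reduces to the isomorphism \eqref{Equation:D(s)not0}.

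First I would settle uniqueness, in the strong form that the only polynomial $\Lambda$ with $\Lambda^{(mk+r_\ell)}(s_\ell)=0$ for all $k\ge 0$ and $0\le\ell\le m-1$ is $\Lambda=0$. I argue by induction on $\deg\Lambda$: if $\deg\Lambda\le m-1$, the hypothesis says $\Lambda$ lies in the kernel of the linear map \eqref{Equation:D(s)not0}, which is injective since $\rmD(\bfs)\neq0$, so $\Lambda=0$; if $\deg\Lambda\ge m$, then $\Lambda^{(m)}$ has strictly smaller degree and satisfies the same system of vanishing conditions (with $k$ replaced by $k+1$), hence $\Lambda^{(m)}=0$, forcing $\deg\Lambda\le m-1$ and reducing to the previous case. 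Subtracting two solutions of \eqref{Equation:Lambda} then yields uniqueness of the family $(\Lambda_{nj})$.

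For existence I proceed by induction on $n$. For $n=0$, since \eqref{Equation:D(s)not0} is an isomorphism there is a unique polynomial $\Lambda_{0j}$ of degree $\le m-1$ with $\Lambda_{0j}^{(r_\ell)}(s_\ell)=\delta_{j\ell}$; as $\deg\Lambda_{0j}\le m-1$, every derivative of order $mk+r_\ell\ge m$ vanishes, so \eqref{Equation:Lambda} holds for $n=0$. Assume $\Lambda_{n-1,j}$ has been constructed with degree $\le m(n-1)+m-1=mn-1$. Choose any polynomial $P$ with $P^{(m)}=\Lambda_{n-1,j}$ and $\deg P\le mn+m-1$. For $k\ge 1$ one computes $P^{(mk+r_\ell)}(s_\ell)=\Lambda_{n-1,j}^{(m(k-1)+r_\ell)}(s_\ell)=\delta_{j\ell}\delta_{n-1,k-1}=\delta_{j\ell}\delta_{nk}$, so $P$ already satisfies every condition indexed by $k\ge 1$, and adding to $P$ a polynomial of degree $\le m-1$ does not disturb these since $mk+r_\ell\ge m$ there. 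It remains to annihilate the values $P^{(r_\ell)}(s_\ell)$ (note $\delta_{n0}=0$ for $n\ge1$): using the isomorphism \eqref{Equation:D(s)not0} again, pick the unique $L$ with $\deg L\le m-1$ and $L^{(r_\ell)}(s_\ell)=-P^{(r_\ell)}(s_\ell)$ for all $\ell$, and set $\Lambda_{nj}=P+L$, which has degree $\le mn+m-1$ and satisfies \eqref{Equation:Lambda}.

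The argument is essentially routine once the recursive structure is spotted; the only point requiring care is that \eqref{Equation:Lambda} imposes infinitely many conditions, so one must verify (as in the uniqueness step) that the associated homogeneous system has no nonzero polynomial solution of \emph{any} degree, not merely of degree $\le mn+m-1$. I expect no serious obstacle here: the relation $\Lambda_{nj}^{(m)}=\Lambda_{n-1,j}$, which follows a posteriori from uniqueness, is the conceptual heart of the construction and makes the induction transparent.
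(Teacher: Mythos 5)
Your argument is correct and coincides essentially with the paper's second proof of Proposition \ref{Prop:InterpolationPolynomials}: uniqueness via induction on the degree using $\Lambda\mapsto\Lambda^{(m)}$, and existence via the recurrence $\Lambda_{nj}^{(m)}=\Lambda_{n-1,j}$ with the degree-$\le m-1$ correction supplied by the isomorphism \eqref{Equation:D(s)not0}. Your explicit verification that the conditions indexed by $k\ge1$ are automatic for $P$ and undisturbed by the correction is a detail the paper passes over with ``clearly,'' but the route is the same.
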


This result plays a main role in our paper; we give two proofs of it. 

\begin{proof}[First proof of  Proposition \ref{Prop:InterpolationPolynomials}].
Assuming $\rmD(\bfs)\not=0$, we prove by induction on $n$ that the linear map 
$$
\begin{matrix}
\psi_n:&\C[z]_{\le m(n+1)-1}&\longrightarrow& \C^{m(n+1)} \qquad
\\
&L(z)&\longmapsto& \bigl(L^{(mk+r_\ell)}(s_\ell)\bigr)_{0\le \ell\le m-1, 0\le k\le n}
\end{matrix}
$$
is an isomorphism of $\C$--vector spaces. For $n=0$ this is the assumption \eqref{Equation:D(s)not0}. Assume $\psi_{n-1}$ is injective for some $n\ge 1$. Let $L\in\ker\psi_n$. Then $L^{(m)}\in\ker\psi_{n-1}$, hence $L^{(m)}=0$, which means that $L$ has degree $<m$. From   \eqref{Equation:D(s)not0} we conclude $L=0$. 

The fact that $\psi_n$ is injective for all $n$ implies that if a polynomial $f\in\C[z]$ satisfies $f^{(mk+r_\ell)}(s_\ell)=0$ for all $k\ge 0$ and all $\ell$ with $0\le \ell\le m-1$, then $f=0$. This shows the unicity of the solution $\Lambda_{nj}$ of \eqref{Equation:Lambda}.

Since $\psi_n$ is injective, it is an isomorphism, and hence surjective: for $0\le j\le n-1$ there exists a unique polynomial $\Lambda_{nj}\in \C[z]_{\le m(n+1)-1}$ such that $\Lambda_{nj}^{(mk+r_\ell)}(s_\ell)=\delta_{j\ell}\delta_{nk}$ for $0\le j, \ell\le m-1$. These conditions show that the set of polynomials $\Lambda_{kj}$ for $0\le k\le n$, $0\le j\le m-1$, is a basis of $\C[z]_{\le m(n+1)-1}$: any polynomial $f\in\C[z]$ of degree $\le m(n+1)-1$ can be written in a unique way 
$$
f(z)=\sum_{j=0}^{m-1} \sum_{k= 0} ^n a_{kj}\Lambda_{kj}(z),
$$
and therefore the coefficients are given by $a_{kj}=f^{(mk+r_j)}(s_j)$. 
\end{proof}

\begin{proof}[Second proof of  Proposition \ref{Prop:InterpolationPolynomials}].
The conditions \eqref{Equation:Lambda} mean that any polynomial $f\in\C[z]$ has an expansion 
\begin{equation}\label{Equation:Expansion}
f(z)=\sum_{j=0}^{m-1} \sum_{n\ge 0} f^{(mn+r_j)}(s_j)\Lambda_{nj}(z),
\end{equation}
where only finitely many terms on the right hand side are nonzero. 

Assuming $\rmD(\bfs)\not=0$, we first prove the unicity of such an expansion by induction on the degree of $f$. The assumption $\rmD(\bfs)\not=0$ shows that there is no nonzero polynomial of degree $<m$ satisfying $f^{(mn+r_j)}(s_j)=0$ for all $(n,j)$ with $0\le n,j\le m-1$. Now if $f$ is a polynomial satisfying $f^{(mn+r_j)}(s_j)=0$ for all $(n,j)$ with $n\ge 0$ and $0\le j\le m-1$, then $f^{(m)}$ satisfies the same conditions and has a degree less than the degree of $f$. By the induction hypothesis we deduce $f^{(m)}=0$, which means that $f$ has degree $<m$, hence $f=0$. This proves the unicity. 

For the existence, let us show that, under the assumption $\rmD(\bfs)\not=0$, the recurrence relations 
$$ 
\Lambda_{nj}^{(m)}=\Lambda_{n-1,j}, \quad 
\Lambda_{nj}^{(r_\ell)}(s_\ell)=0 \text{ for $n\ge 1$},
\quad \Lambda_{0j}^{(r_\ell)}(s_\ell)=\delta_{j\ell} \text{ for $0\le j, \ell\le m-1$}
$$ 
have a unique solution given by polynomials $\Lambda_{nj}(z)$, ($n\ge 0$, $j=0,\dots,m-1$), where $\Lambda_{nj}$ has degree $\le mn+m-1$. Clearly, these polynomials will satisfy \eqref{Equation:Lambda}.

From the assumption $\rmD(\bfs)\not=0$ we deduce that, for $0\le j\le m-1$, there is a unique polynomial 
$ \Lambda_{0j}$ of degree $<m$ satisfying
 $$
 \Lambda_{0j}^{(r_\ell)}(s_\ell)=\delta_{j\ell} \text{ for $0\le \ell\le m-1$}.
 $$
By induction, given $n\ge 1$ and $j\in\{0,1,\dots,m-1\}$, once we know $\Lambda_{n-1,j}(z)$, we choose a solution $L$ of the differential equation $L^{(m)}=\Lambda_{n-1,j}$; using again the assumption $\rmD(\bfs)\not=0$, we deduce that there is a unique polynomial $\Ltilde$ of degree $<m$ satisfying $\Ltilde^{(r_\ell)}(s_\ell)=L^{(r_\ell)}(s_\ell)$ for $0\le \ell\le m-1$; then the solution is given by $\Lambda_{nj}=L-\Ltilde$.
\end{proof}

\begin{remark}{\rm
The following converse of Proposition \ref{Prop:InterpolationPolynomials} is plain: if there exists a unique tuple 
$$
\bigl(\Lambda_{00}(z),\Lambda_{01}(z),\dots,\Lambda_{0,m-1}(z)\bigr)
$$ 
of polynomials of degree $\le m-1$ satisfying 
$$
 \Lambda_{0j}^{(r_\ell)}(s_\ell)=\delta_{j\ell} \text{ for $0\le j, \ell\le m-1$},
 $$
then $\rmD(\bfs)\not=0$. 
}
\end{remark}

\medskip
\noindent
{\bf Examples}
\\
Special cases of Proposition \ref{Prop:InterpolationPolynomials} have already been introduced in the literature. 
\\
(1) Lidstone polynomials with $\{0,1\}$ \cite[\S 3.1]{MW-Lidstone2points}:
$$
\hbox{
 $m=2$, $s_0=0$, $s_1=1$, $r_0=r_1=0$, $\Lambda_{n0}(z)=\Lambda_n(1-z)$, $\Lambda_{n1}(z)=\Lambda_n(z)$. }
 $$ 
(2) Lidstone polynomials with $\{s_0,s_1\}$ and $s_0\not=s_1$; with the notation of \cite[\S 3.2]{MW-Lidstone2points}: 
$$
\hbox{
$m=2$, $r_0=r_1=0$, $\Lambda_{n0}(z)=-\Lambdatilde_n(z-s_1)$, $\Lambda_{n1}(z)=\Lambdatilde_n(z-s_0)$.}
$$ 
(3) Whittaker polynomials with $\{0,1\}$; with the notation of \cite[\S 5.1]{MW-Lidstone2points}: 
$$
\hbox{
$m=2$, $s_0=1$, $s_1=0$, $r_0=0$, $r_1=1$, $\Lambda_{n0}(z)=M_n(z)$, $\Lambda_{n1}(z)=M'_{n+1}(z-1)$.
}
$$ 
(4) Whittaker polynomials with $\{s_0,s_1\}$; with the notation of \cite[\S 5.2]{MW-Lidstone2points} (beware that  this reference deals with the even derivatives at $s_0$ and the odd derivatives at $s_1$, while here we impose $r_0\le r_1$):  
$$
\hbox{
$m=2$, $r_0=0$, $r_1=1$, $\Lambda_{n0}(z)=\Mtilde_n(z-s_1)$, $\Lambda_{n1}(z)=\Mtilde'_{n+1}(z-s_0)$.
}
$$
(5) \cite{MR1501639}, 
quoted by \cite[\S 3]{MR0059366}, 
\cite{MR0072947} 
(see also \cite[Chap.~3, \S 4.3]{zbMATH03416363})
: assuming $s_0,s_1,\dots,s_{m-1}$ are pairwise distinct, 
$$
r_0=r_1=\cdots=r_{m-1}=0.
$$ 
(6) \cite{zbMATH02563461}, 
quoted by \cite[\S 4]{MR0059366}, 
 \cite{MR0072947} 
(see also 
\cite[Chap.~3, \S 4.2]{zbMATH03416363}): 
$$
r_j=j \quad\hbox{for}\quad j=0,1,\dots,m-1.
$$

\subsection{Exponential sums, following D.~Roy}\label{SS:ExponentialSums}

This section is due to D.~Roy (private communication). 

Given complex numbers $a_0,a_1,\cdots$ and non negative real numbers $c_0,c_1,\dots$, we write
$$
\sum_{i\ge 0} a_iz^i\preceq_z \sum_{i\ge0} c_iz^i
$$
if $|a_i|\le c_i$ for all $i\ge 0$. In the same way, given two power series $\sum_{i\ge 0,j\ge 0} a_{ij}t^i z^j$ and  $\sum_{i\ge 0,j\ge 0} c_{ij}t^i z^j$ with $a_{ij}\in\C$ and  $c_{ij}\in\R_{\ge 0}$, we write
$$
\sum_{i\ge 0}\sum_{j\ge 0} a_{ij}t^i z^j\preceq_{t,z} \sum_{i\ge 0}\sum_{j\ge 0} c_it^iz^j
$$
if $|a_{ij}|\le c_{ij}$ for all $i,j$.

We first give a quantitative version of Proposition \ref{Prop:InterpolationPolynomials}.

\begin{lemma}\label{Lemma:MajorationLambda-n-j} 
There exists a constant $\Theta>0$ such that 
$$
\Lambda_{nj}(z)\preceq_z \sum_{i=0}^{m(n+1)-1}\frac{\Theta^{m(n+1)-i}}{i!} z^i
$$
for all $n\ge 0$ and $j=0,1,\dots,m-1$. 
\end{lemma}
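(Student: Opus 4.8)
The plan is to prove the bound by induction on $n$, using the recurrence relations for the $\Lambda_{nj}$ established in the second proof of Proposition \ref{Prop:InterpolationPolynomials}, namely $\Lambda_{0j}^{(r_\ell)}(s_\ell)=\delta_{j\ell}$, and for $n\ge 1$, $\Lambda_{nj}^{(m)}=\Lambda_{n-1,j}$ together with $\Lambda_{nj}^{(r_\ell)}(s_\ell)=0$. The key observation is that the operation "integrate $m$ times and then correct by a polynomial of degree $<m$ to kill the interpolation data" interacts in a controlled way with the majorant order $\preceq_z$, so that a geometric-type constant $\Theta$ can be propagated.

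First I would treat the base case: the finitely many polynomials $\Lambda_{0j}$ ($0\le j\le m-1$) each have degree $<m$, so by choosing $\Theta$ large enough (depending on $\bfs$, $m$, the $r_j$, and the inverse of the matrix defining $\rmD(\bfs)$) one can ensure $\Lambda_{0j}(z)\preceq_z\sum_{i=0}^{m-1}(\Theta^{m-i}/i!)z^i$ for all $j$; this is just a finite check. For the inductive step, suppose the bound holds for $\Lambda_{n-1,j}$. Antidifferentiating $m$ times: if $\Lambda_{n-1,j}(z)=\sum_{i=0}^{mn-1}b_iz^i$ with $|b_i|\le\Theta^{mn-i}/i!$, then a primitive of order $m$ with zero constant terms is $\sum_{i=0}^{mn-1}b_i\frac{i!}{(i+m)!}z^{i+m}$, whose $(i+m)$-th coefficient is bounded by $\frac{\Theta^{mn-i}}{(i+m)!}=\frac{\Theta^{m(n+1)-(i+m)}}{(i+m)!}$; reindexing, this already has the shape required for $\Lambda_{nj}$, on the coefficients of degree $\ge m$. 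Then I would add the degree-$<m$ correction term $-\Ltilde$: since $\Ltilde$ is the unique polynomial of degree $<m$ with prescribed values $\Ltilde^{(r_\ell)}(s_\ell)=L^{(r_\ell)}(s_\ell)$, and these values are bounded (via Cauchy-type estimates on the explicit primitive $L$) by a fixed constant times $\Theta^{m(n+1)}$ uniformly in $n$ and $j$, the coefficients of $\Ltilde$ are bounded by (const)$\cdot\Theta^{m(n+1)}$, which is absorbed into $\sum_{i=0}^{m-1}\Theta^{m(n+1)-i}z^i/i!$ provided $\Theta$ was chosen large enough at the outset.

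The main obstacle, and the point requiring care, is choosing the single constant $\Theta$ so that \emph{both} the base case and the correction-term estimate in the inductive step go through simultaneously, uniformly in $n$ — one must verify that the constants coming from the linear algebra (the norm of the inverse of the interpolation map on $\C[z]_{\le m-1}$) and from bounding the derivatives $L^{(r_\ell)}(s_\ell)$ of the explicit $m$-fold primitive (which involves $|s_\ell|^{\le m-1}$ and binomial factors, all bounded independently of $n$) do not grow with $n$, so that enlarging $\Theta$ once suffices. A clean way to organize this is: first fix an auxiliary constant $C\ge 1$ dominating all the finitely many linear-algebra and evaluation constants, then show the inductive step works for any $\Theta\ge$ some explicit expression in $C$, $m$, and $\max_\ell|s_\ell|$, and finally enlarge $\Theta$ further if needed to cover the base case. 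The antidifferentiation step itself is routine once the bookkeeping with factorials is done, since passing from $b_i$ to $b_i\,i!/(i+m)!$ only improves the factorial denominator.
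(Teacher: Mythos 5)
Your overall strategy is exactly the paper's: induction on $n$ via the decomposition $\Lambda_{nj}=L-\Ltilde$, where $L$ is the $m$-fold primitive of $\Lambda_{n-1,j}$ vanishing to order $m$ at the origin and $\Ltilde\in\C[z]_{\le m-1}$ matches the data $L^{(r_\ell)}(s_\ell)$, with a single constant $\Theta$ chosen to cover the base case, the linear-algebra constant coming from the isomorphism \eqref{Equation:D(s)not0}, and the evaluation bounds. The base case and the antidifferentiation step are handled as in the paper.

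There is, however, one quantitative slip that, as written, breaks the induction. You bound the values $L^{(r_\ell)}(s_\ell)$ by a fixed constant times $\Theta^{m(n+1)}$ and claim this is absorbed into $\sum_{i=0}^{m-1}(\Theta^{m(n+1)-i}/i!)z^i$ ``provided $\Theta$ was chosen large enough at the outset.'' But the smallest majorant coefficient in that sum is $\Theta^{m(n+1)-(m-1)}/(m-1)!=\Theta^{mn+1}/(m-1)!$, so absorbing a bound of the form $C\,\Theta^{m(n+1)}$ would force $C\le\Theta^{-(m-1)}/(m-1)!$ --- an inequality that gets \emph{harder}, not easier, as $\Theta$ grows. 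The induction closes only because the evaluation bound is in fact a full factor $\Theta^{m}$ better than what you state: since $L$ has valuation $\ge m$ at the origin, its coefficient of $z^{i+m}$ is controlled by $\Theta^{mn-i}/(i+m)!$, whence
$$
|L^{(r_\ell)}(s_\ell)|\le\sum_{i=0}^{mn-1}\frac{\Theta^{mn-i}A^{i+m-r_\ell}}{(i+m-r_\ell)!}\le\Theta^{mn}A^m\exp(A/\Theta),
\qquad A=\max\{1,|s_0|,\dots,|s_{m-1}|\},
$$
i.e.\ a constant times $\Theta^{mn}$, not $\Theta^{m(n+1)}$. With this, the coefficients of $\Ltilde$ are bounded by $B\,\Theta^{mn}A^m\exp(A/\Theta)/i!$, and the single condition $\Theta\ge BA^m\exp(A/\Theta)$ (achievable by enlarging $\Theta$) yields $\Ltilde\preceq_z\Theta^{mn+1}\sum_{i=0}^{m-1}z^i/i!\preceq_z\sum_{i=0}^{m-1}(\Theta^{m(n+1)-i}/i!)z^i$ because $\Theta\ge1$. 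So the idea and the architecture are right, but the exponent $m(n+1)$ in your intermediate bound must be replaced by $mn$ --- the gain coming precisely from the $m$-fold integration --- for the absorption step to be valid.
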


\begin{proof}
We proceed by induction.
For $n=0$ it suffices to choose $\Theta>0$ sufficiently large so that 
$$
\Lambda_{0j}(z)\preceq_z \sum_{i=0}^{m-1}\frac{\Theta^{m-i}}{i!} z^i
$$
for $j=0,1,\dots,m-1$.  Assume
$$
\Lambda_{n-1,j}(z)\preceq_z \sum_{i=0}^{mn-1}\frac{\Theta^{mn-i}}{i!} z^i
$$
for some integer $n\ge 1$ and for $j=0,1,\dots,m-1$. Fix an integer $j$ and let $L(z)\in\C[z]$ be the polynomial satisfying 
$$
L^{(m)}(z)=\Lambda_{n-1,j}(z)\quad
\hbox{and}
\quad
L(0)=L'(0)=\cdots=L^{(m-1)}(0)=0.
$$
We have 
$$
L(z)\preceq_z \sum_{i=0}^{mn-1}\frac{\Theta^{mn-i}}{(i+m)!} z^{i+m}
=
\sum_{i=m}^{m(n+1)-1}\frac{\Theta^{m(n+1)-i}}{i!} z^i.
$$
Set $A=\max\{1,|s_0|,\dots,|s_{m-1}|\}$. For $\ell=0,1,\dots,m-1$, we have 
$$
|L^{(r_\ell)}(s_\ell)|\le \sum_{i=0}^{mn-1}\frac{\Theta^{mn-i} A^{i+m-r_\ell} }{(i+m-r_\ell)!} 
=
\Theta^{mn} A^{m-r_\ell}\sum_{i=0}^{mn-1}\frac{(A/\Theta)^i}{(i+m-r_\ell)!} 
\le \Theta^{mn} A^m \exp(A/\Theta).
$$
From the isomorphism \eqref{Equation:D(s)not0} it follows that there is a constant $B>0$ such that, for any polynomial $\Ltilde(z)\in\C[z]_{\le m-1}$,
$$
\Ltilde (z)\preceq_z \left(\max_{0\le \ell\le m-1} |\Ltilde^{(r_\ell)}(s_\ell) |\right) B \sum_{i=0}^{m-1} \frac {z^i}{i!}\cdotp
$$
Choosing $\Ltilde(z)$ such that 
$$
\Ltilde^{(r_\ell)}(s_\ell)= L^{(r_\ell)}(s_\ell)
$$
for $\ell=0,1,\dots,m-1$ 
and assuming $\Theta\ge 1$ sufficiently large so that 
$$
\Theta\ge B A^m \exp(A/\Theta),
$$
we get
$$
\Ltilde(z) \preceq_z \Theta^{mn+1} \sum_{i=0}^{m-1} \frac {z^i}{i!} \preceq_z \sum_{i=0}^{m-1} \frac {\Theta^{m(n+1)-i} }{i!} z^i ,
$$
hence 
$$
\Lambda_{nj} (z)= L(z)-\Ltilde(z)\preceq_z \sum_{i=0}^{m(n+1)-1}\frac{\Theta^{m(n+1)-i}}{i!} z^i.
$$
\end{proof}

For $j=0,1,\dots,m-1$ and $z\in\C$, consider the power series $\varphi_j(t,z)\in\C[[t]]$ defined by 
\begin{equation}\label{equation:varphij}
\varphi_j(t,z)=\sum_{n\ge 0} t^{mn+r_j}\Lambda_{nj}(z). 
\end{equation}
From Lemma \ref{Lemma:MajorationLambda-n-j} it follows that we have
$$
\varphi_j(t,z)\preceq_{t,z} \sum_{n\ge 0} \sum_{i=0}^{m(n+1)-1}\frac{\Theta^{m(n+1)-i}}{i!} t^{mn+r_j}z^i,
$$
and therefore the function of two complex variables $(t,z)\mapsto \varphi_j(t,z)$ is analytic in the domain $|t|<  1/ \Theta$, $z\in\C$. 
 
\begin{lemma}\label{Lemma:etz} 
For $|t|<  1/ \Theta$ and $z\in\C$, we have
$$
\rme^{tz}=\sum_{j=0}^{m-1}\rme^{ts_j}\varphi_j(t,z).
$$
\end{lemma}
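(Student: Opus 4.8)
The plan is to fix $z\in\C$ and regard both sides of the asserted identity as analytic functions of the single variable $t$ on the disc $|t|<1/\Theta$, and to prove they agree by matching their Taylor expansions at $t=0$. The left side $e^{tz}$ is entire in $t$, with coefficient $z^N/N!$ at $t^N$. For the right side, the discussion following \eqref{equation:varphij} — which rests on Lemma~\ref{Lemma:MajorationLambda-n-j} — shows that each $\varphi_j(\cdot,z)$ is analytic on $|t|<1/\Theta$; since every $e^{ts_j}$ is entire and the sum over $j$ is finite, $\sum_{j=0}^{m-1}e^{ts_j}\varphi_j(t,z)$ is analytic on that disc.

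I would then compute its Taylor coefficient of $t^N$. Substituting $e^{ts_j}=\sum_{p\ge0}(s_j^p/p!)\,t^p$ and $\varphi_j(t,z)=\sum_{n\ge0}\Lambda_{nj}(z)\,t^{mn+r_j}$ and forming the Cauchy product — legitimate since, for fixed $z$, all the series in sight converge absolutely on $|t|<1/\Theta$ by Lemma~\ref{Lemma:MajorationLambda-n-j} — one gets that the coefficient of $t^N$ in $\sum_{j=0}^{m-1}e^{ts_j}\varphi_j(t,z)$ equals
$$
\sum_{j=0}^{m-1}\;\sum_{\substack{n\ge0\\ mn+r_j\le N}}\frac{s_j^{\,N-mn-r_j}}{(N-mn-r_j)!}\,\Lambda_{nj}(z).
$$

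The crux is then to recognise this finite sum as $z^N/N!$. That is precisely the polynomial expansion \eqref{Equation:Expansion} applied to $f(z)=z^N/N!$: since $f^{(p)}(z)=z^{N-p}/(N-p)!$ for $0\le p\le N$ and $f^{(p)}=0$ for $p>N$, we have $f^{(mn+r_j)}(s_j)=s_j^{\,N-mn-r_j}/(N-mn-r_j)!$ exactly when $mn+r_j\le N$ and $0$ otherwise, so \eqref{Equation:Expansion} becomes the displayed identity with left-hand side $z^N/N!$. Hence the two analytic functions of $t$ have the same Taylor series at $0$, so they coincide on the connected disc $|t|<1/\Theta$; as $z\in\C$ was arbitrary, this proves the lemma.

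I do not expect a genuine obstacle here: the only delicate point is the bookkeeping of the Cauchy product/rearrangement for the double series on the right, which is routine once the absolute convergence coming from Lemma~\ref{Lemma:MajorationLambda-n-j} is invoked. As a variant one can avoid power series altogether: from $\partial_z^m\varphi_j=t^m\varphi_j$ and $\partial_z^{r_\ell}\varphi_j(t,z)\big|_{z=s_\ell}=t^{r_j}\delta_{j\ell}$ one sees that $H(t,z):=\sum_{j=0}^{m-1}e^{ts_j}\varphi_j(t,z)-e^{tz}$ satisfies $\partial_z^mH=t^mH$ with $\partial_z^{r_\ell}H(t,z)|_{z=s_\ell}=0$; writing $H(t,z)=\sum_{k=0}^{m-1}c_k(t)\,e^{\zeta^k t z}$ for $t\neq0$ (with $\zeta$ a primitive $m$-th root of unity) and using that $\Delta(t)\neq0$ outside a discrete set of $t$, together with analyticity of $H$ in $t$, forces $H\equiv0$. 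This second route, however, calls on the later Lemma~\ref{Lemma:Deltanot0}, whereas the power-series argument does not.
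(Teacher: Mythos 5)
Your proof is correct and follows essentially the same route as the paper's: both expand in powers of $t$ and reduce the identity to a statement about polynomials that is settled by Proposition \ref{Prop:InterpolationPolynomials}. The only real difference is one of phrasing --- the paper forms $F(t,z)=\sum_j\rme^{ts_j}\varphi_j(t,z)-\rme^{tz}=\sum_N a_N(z)t^N$ and shows each polynomial coefficient satisfies $a_N^{(mk+r_\ell)}(s_\ell)=0$, hence vanishes by the uniqueness part of that proposition, whereas you compute the coefficient of $t^N$ explicitly via the Cauchy product and recognise it as the expansion \eqref{Equation:Expansion} of $z^N/N!$ by the existence part; these are two equivalent uses of the fact that the $\Lambda_{nj}$ form a basis adapted to the functionals $f\mapsto f^{(mk+r_\ell)}(s_\ell)$.
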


\begin{proof}
Define, for $|t|<  1/ \Theta$ and $z\in\C$, 
$$
F(t,z)= \sum_{j=0}^{m-1} \rme^{ts_j}\varphi_j(t,z)-\rme^{tz}.
$$
We have 
$$
F(t,z)=\sum_{j=0}^{m-1} \rme^{ts_j} \sum_{n\ge 0} t^{mn+r_j}\Lambda_{nj}(z)-\rme^{tz}
=\sum_{n\ge 0} a_n(z) t^n,
$$
where $a_n(z)\in\C[z]_{\le n+m-1}$ for all $n\ge 0$. We obtain, for all $k\ge 0$ and $\ell=0,1,\dots,m-1$, 
$$
\left.
\left(
\frac \partial {\partial z}
\right)^{mk+r_\ell}
F(t,z)
\right|_{z=s_\ell} 
=
\sum_{j=0}^{m-1} \rme^{ts_j}\sum_{n\ge 0} t^{mn+r_j} \Lambda_{nj}^{(mk+r_\ell)}(s_\ell)
-t^{mk+r_\ell}\rme^{t s_\ell}
=0,
$$
hence
$$
\sum_{n\ge 0} a_n^{(mk+r_\ell)}(s_\ell) t^n=0
$$
for $|t|<  1 /\Theta$. Therefore $a_n^{(mk+r_\ell)}(s_\ell)=0$ for all $k\ge 0$,  $n\ge 0$ and $\ell=0,1,\dots,m-1$. We conclude
$a_n(z)=0$ for all $n\ge 0$, which proves Lemma \ref{Lemma:etz}. 
\end{proof}

For $0<|t|<  1/ \Theta$ and $j=0,1,\dots,m-1$, we have
$$
\left(
\frac \partial {\partial z}
\right)^m\varphi_j(t,z)=\sum_{n\ge 0} t^{mn+r_j}\Lambda_{nj}^{(m)}(z)=\sum_{n\ge 1} t^{mn+r_j}\Lambda_{n-1,j}(z)=
t^m\varphi_j(t,z).
$$
The functions $\varphi_0(t,z),\varphi_1(t,z),\dots,\varphi_{m-1}(t,z)$ are the solutions of the differential equation
$$
f^{(m)}(z)=t^m f(z)
$$
with the initial conditions
\begin{equation}\label{Equation:InitialConditions}
\left(\frac{\partial}{\partial z}\right)^{r_\ell}\varphi_j(t,s_\ell)=t^{r_\ell}\delta_{j\ell} \quad\hbox{for}\quad 0\le j,\ell\le m-1.
\end{equation}
Recall that $\zeta$ is a primitive $m$-th root of unity. The general solution of this differential equation is a linear combination of the functions $\rme^{\zeta^k tz}$ ($k=0,1,\dots,m-1$) with coefficients depending on $t$. 
Hence for $0<|t|<  1/ \Theta$ there exist complex numbers $c_{jk}(t)$ ($j,k = 0,1 \dots,m-1$) such that 
\begin{equation}\label{Equation:definitionvarphi}
\varphi_j(t,z)=\sum_{k=0}^{m-1} c_{jk}(t) \rme^{\zeta^k tz}.
\end{equation}
For $\ell=0,1,\dots,m-1$, this yields 
$$
\sum_{k=0}^{m-1} c_{jk}(t)(\zeta^kt)^\ell=
\left.
\left(
\frac{\partial}{\partial z}\right)^\ell \varphi_j(t,z)\right|_{z=0}\
=
\sum_{n\ge 0} t^{mn+r_j}\Lambda_{nj}^{(\ell)}(0),
$$
and thus we deduce that 
$$
t^\ell \sum_{k=0}^{m-1} c_{jk}(t)\zeta^{k\ell}
\preceq_t \sum_{n\ge 0} \Theta^{m(n+1)-\ell} t^{mn+r_j}.
$$
Since the matrix $(\zeta^{k\ell})_{0\le k,\ell\le m-1}$ is invertible, this shows that the functions $c_{jk}(t)$ are meromorphic for $|t|<  1/ \Theta$ with at most a pole at $t=0$ of order $\le m-1$.
 
\subsection{Analytic continuation of $\varphi_j(t,z)$}
 
From 
 \eqref{Equation:InitialConditions}
 and
 \eqref{Equation:definitionvarphi} we deduce that for $j=0,1,\dots,m-1$ and $0<|t|< 1/ \Theta$, we have
$$
\sum_{k=0}^{m-1} c_{jk}(t) \zeta^{kr_\ell} \rme^{\zeta^k t s_\ell}= \delta_{j\ell} \quad (0\le \ell\le m-1).
$$ 
Hence for $|t|<  1/ \Theta$ the matrix $\bigl(c_{jk}(t)\bigr)_{0\le j, k \le m-1}$ is the inverse of the matrix $M(t)$. 
Recall  (Section \ref{SS:periodicm}) that $\Delta(t)$ is the determinant of the matrix $M(t)=\left(\zeta^{kr_\ell} \rme^{\zeta^k t s_\ell}\right)_{0\le k, \ell\le m-1}$.
 We deduce:
\begin{lemma}\label{Lemma:Deltanot0}
The determinant $\Delta(t)$ 
does not vanish for $0<|t|< 1/ \Theta\cdotp$
\end{lemma}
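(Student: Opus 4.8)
The plan is to deduce Lemma~\ref{Lemma:Deltanot0} directly from the fact, established just above in Section~\ref{SS:ExponentialSums}, that for $0<|t|<1/\Theta$ the matrix $M(t)$ admits the matrix $\bigl(c_{jk}(t)\bigr)_{0\le j,k\le m-1}$ as a two-sided inverse. Indeed, the relation
$$
\sum_{k=0}^{m-1} c_{jk}(t)\,\zeta^{kr_\ell}\rme^{\zeta^k t s_\ell}=\delta_{j\ell}\quad(0\le j,\ell\le m-1)
$$
says precisely that $\bigl(c_{jk}(t)\bigr)\,M(t)=I_m$. A square matrix over a field that has a left inverse is invertible, so its determinant $\Delta(t)=\det M(t)$ must be nonzero for every $t$ with $0<|t|<1/\Theta$. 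That is the entire content of the lemma, so the proof is essentially a one-line invocation of this displayed identity.

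The only point that deserves a word of care is the provenance of the displayed identity itself, i.e. making sure the ingredients are legitimately in hand. First, Proposition~\ref{Prop:InterpolationPolynomials} gives the polynomials $\Lambda_{nj}$, and Lemma~\ref{Lemma:MajorationLambda-n-j} gives the growth bound $\Lambda_{nj}(z)\preceq_z\sum_{i=0}^{m(n+1)-1}\Theta^{m(n+1)-i}z^i/i!$, which is what guarantees that $\varphi_j(t,z)=\sum_{n\ge0}t^{mn+r_j}\Lambda_{nj}(z)$ converges and is analytic for $|t|<1/\Theta$. Second, the computation $(\partial/\partial z)^m\varphi_j=t^m\varphi_j$ identifies each $\varphi_j(t,\cdot)$, for fixed $t\ne0$, as a solution of $f^{(m)}=t^mf$, hence a $\C$-linear combination $\sum_k c_{jk}(t)\rme^{\zeta^k tz}$ of the $m$ independent exponential solutions. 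Third, substituting the initial conditions \eqref{Equation:InitialConditions} — namely $(\partial/\partial z)^{r_\ell}\varphi_j(t,s_\ell)=t^{r_\ell}\delta_{j\ell}$ — and dividing through by $t^{r_\ell}$ (permissible since $t\ne0$) yields exactly the system $\sum_k c_{jk}(t)\zeta^{kr_\ell}\rme^{\zeta^k ts_\ell}=\delta_{j\ell}$, i.e. $\bigl(c_{jk}(t)\bigr)M(t)=I_m$. All of these steps are already carried out in the excerpt, so in the written proof I would simply cite them and then conclude.

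There is no real obstacle here; if anything, the subtlety is purely bookkeeping — one must be sure the indices match up so that the matrix product $\bigl(c_{jk}(t)\bigr)\,M(t)$ (rather than $M(t)\,\bigl(c_{jk}(t)\bigr)$) is the one that equals the identity, since with $M(t)=(\zeta^{kr_\ell}\rme^{\zeta^k ts_\ell})_{0\le k,\ell\le m-1}$ the row index of $M$ is $k$ and the column index is $\ell$, while $c_{jk}$ is summed over its second index $k$. Either way, having a one-sided inverse over the field $\C$ forces invertibility and hence $\Delta(t)\ne0$; this is the whole proof. One could also note, for later use (it is invoked in Theorem~\ref{Theorem:mpointsperiodic} via the choice of $\tau$), that $\Delta$ is an exponential polynomial which is not identically zero — this follows from Lemma~\ref{Lemma:Deltanot0} itself, since $\Delta$ is a nonzero entire function on a punctured neighbourhood of $0$ and extends analytically across $0$ — but strictly speaking that remark belongs to the discussion surrounding the statement rather than to this proof.
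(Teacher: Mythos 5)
Your proposal is correct and is essentially identical to the paper's own argument: the initial conditions \eqref{Equation:InitialConditions} substituted into \eqref{Equation:definitionvarphi} show that $\bigl(c_{jk}(t)\bigr)$ is an inverse of $M(t)$ for $0<|t|<1/\Theta$, whence $\Delta(t)=\det M(t)\neq 0$. No further comment is needed.
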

  
The determinant $\Delta(t)$ defines a nonzero entire function in $\C$.  
We extend the definition of $c_{jk}(t)$ to meromorphic functions in $\C$ by the condition that the matrix $\bigl(c_{jk}(t)\bigr)_{0\le j, k \le m-1}$ is the inverse of the matrix $M(t)$. 
From the assumption in 
Theorem \ref{Theorem:mpointsperiodic} that $\Delta(t)$ does not vanish for $0<|t|<\tau$,  we infer that $c_{jk}(t)$ is analytic in the domain $0<|t|<\tau$.
By means of \eqref{Equation:definitionvarphi}, this defines $\varphi_j(t,z)$ for all $z\in\C$ and for all $t$ with $\Delta(t)\not=0$. In particular the function of two variables  $t\mapsto \varphi_j(t,z)$ is analytic in the domain $|t|<\tau$,  $z\in\C$, and  \eqref{equation:varphij} is valid in this domain.

 \begin{lemma}\label{Lemme:majorationLambda}
Let $\varrho$ satisfy $0<\varrho<\tau$. For $z\in\C$ and $0\le j\le m-1$ we have 
$$
|\Lambda_{nj}(z)|\le \varrho^{-mn-r_j} \sup_{|t|=\varrho} |\varphi_j(t,z)|.
$$
\end{lemma}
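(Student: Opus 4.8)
The plan is to recognise $\Lambda_{nj}(z)$ as a Taylor coefficient of the function $t\mapsto\varphi_j(t,z)$ and then apply a Cauchy estimate on the circle $|t|=\varrho$. By the discussion preceding the statement, for each fixed $z\in\C$ the function $t\mapsto\varphi_j(t,z)$ is analytic on the disc $|t|<\tau$, and throughout this disc its Taylor expansion is the one given by \eqref{equation:varphij}, namely $\varphi_j(t,z)=\sum_{n\ge0}t^{mn+r_j}\Lambda_{nj}(z)$. In particular, $\Lambda_{nj}(z)$ is exactly the coefficient of $t^{mn+r_j}$ in this power series.

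First I would fix $n\ge0$, $j\in\{0,1,\dots,m-1\}$, $z\in\C$ and $\varrho$ with $0<\varrho<\tau$. Since $\varphi_j(\cdot,z)$ is holomorphic on a neighbourhood of the closed disc $|t|\le\varrho$, Cauchy's formula for Taylor coefficients gives
$$
\Lambda_{nj}(z)=\frac{1}{2\pi i}\int_{|t|=\varrho}\frac{\varphi_j(t,z)}{t^{mn+r_j+1}}\,\rmd t.
$$
Then I would estimate the integrand trivially: on the circle $|t|=\varrho$ it has modulus at most $\varrho^{-mn-r_j-1}\sup_{|t|=\varrho}|\varphi_j(t,z)|$, and the circle has length $2\pi\varrho$, so
$$
|\Lambda_{nj}(z)|\le\frac{1}{2\pi}\cdot 2\pi\varrho\cdot\varrho^{-mn-r_j-1}\sup_{|t|=\varrho}|\varphi_j(t,z)|=\varrho^{-mn-r_j}\sup_{|t|=\varrho}|\varphi_j(t,z)|,
$$
which is the asserted inequality. (Equivalently, one could invoke the standard Cauchy inequality for the coefficients of a power series directly, without writing the integral; the integral form simply makes the dependence on $\sup_{|t|=\varrho}|\varphi_j(t,z)|$ transparent.)

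There is essentially no genuine obstacle in this proof; it is a one-line Cauchy estimate once the setup is in place. The only point that needs care — and which has already been handled in the preceding subsection — is that the series expansion \eqref{equation:varphij} converges and represents $\varphi_j(t,z)$ on the whole disc $|t|<\tau$, not merely on $|t|<1/\Theta$. This is precisely what the analytic continuation of the functions $c_{jk}(t)$ to the region $0<|t|<\tau$ (where $\Delta(t)\neq0$), combined with formula \eqref{Equation:definitionvarphi} and the fact that the combination $\sum_k c_{jk}(t)\rme^{\zeta^k tz}$ has no pole at $t=0$, provides; so the hypothesis $\varrho<\tau$ is exactly what is needed to integrate over $|t|=\varrho$.
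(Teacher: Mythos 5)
Your proof is correct and is essentially identical to the paper's: both identify $\Lambda_{nj}(z)$ as the coefficient of $t^{mn+r_j}$ in the expansion \eqref{equation:varphij}, valid on $|t|<\tau$ by the analytic continuation of the preceding subsection, and then apply Cauchy's integral formula on the circle $|t|=\varrho$. Nothing is missing; your added remark about why the expansion holds up to $|t|<\tau$ is exactly the point the paper relies on.
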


\begin{proof}
The Taylor expansion at the origin of the meromorphic function $t\mapsto \varphi_j(t,z)$ is given by the formula \eqref{equation:varphij}, which is therefore valid for $|t|<\tau$. Hence 
$$
\Lambda_{nj}(z)=
\frac 1 {2i\pi} \int_{|t|=\varrho} 
\varphi_j(t,z) t^{-mn-r_j-1}\rmd t.
$$
Lemma \ref{Lemme:majorationLambda} follows.
\end{proof}

\noindent
{\bf Examples}
\\
(1) Lidstone 
\cite[\S 3.1]{MW-Lidstone2points}:
$m=2$, $s_0=0$, $s_1=1$, $r_0=r_1=0$, 
$$
 \varphi_0(t,z)=\frac{\sinh((1-z)t)}{\sinh (t)}, \quad \varphi_1(t,z)=\frac{\sinh(tz)}{\sinh (t)}\cdotp
$$ 
(2) Whittaker 
 \cite[\S 5.1]{MW-Lidstone2points}:
 $m=2$, $s_0=1$, $s_1=0$, $r_0=0$, $r_1=1$, 
$$
 \varphi_0(t,z)=\frac{\cosh(tz)}{\cosh (t)}, \quad \varphi_1(t,z)=\frac{\sinh((z-1)t)}{\cosh (t)}\cdotp
$$ 
(3) Poritsky interpolation -- see 
\cite[\S 3]{MR0059366}: 
$r_0=r_1=\cdots=r_{m-1}=0$.
The condition $\rmD(\bfs)=0$ means that  $s_0,s_1,\dots,s_{m-1}$ are   pairwise distinct. The coefficient of $t^{m(m-1)/2}$ in the Taylor expansion at the origin of $\Delta(t)$ is given by the following formula involving two Vandermonde determinants:
$$
\frac{1}{1!2!\cdots (m-1)!}
\det 
\begin{pmatrix}
1&1&\cdots&1
\\
1&\zeta &\cdots&\zeta^{m-1}
\\
1 &\zeta^2 &\cdots&\zeta^{2(m-1)} 
\\
\vdots&\vdots&\ddots&\vdots
\\
1 &\zeta^{m-1} &\cdots&\zeta^{(m-1)^2} 
\\ 
\end{pmatrix}
\det 
\begin{pmatrix}
1&1&\cdots&1
\\
s_0 &s_1 &\cdots&s_{m-1} 
\\
s_0^2 &s_1^2 &\cdots&s_{m-1}^2
\\
\vdots&\vdots&\ddots&\vdots
\\
s_0^{m-1} &s_1^{m-1} &\cdots &s_{m-1}^{m-1}
\\ 
\end{pmatrix}.
 $$
 Hence $\Delta(t)$ has a zero at the origin of multiplicity $ m(m-1)/2$.
 
 For $0\le j\le m-1$, the order of the zero at $t=0$ of $\Delta(t) \varphi_j(t,z)$ is at least $m(m-1)/2$. 

\medskip\noindent
(4) Gontcharoff interpolation -- see 
\cite[\S 4]{MR0059366}: 
$r_j=j $ for $ j=0,1,\dots,m-1$.
In this case $\Delta(0)$ is the Vandermonde determinant
$$ 
\det 
\begin{pmatrix}
1&1&\cdots&1
\\
1&\zeta &\cdots&\zeta^{m-1}
\\
1 &\zeta^2 &\cdots&\zeta^{2(m-1)} 
\\
\vdots&\vdots&\ddots&\vdots
\\
1 &\zeta^{m-1} &\cdots&\zeta^{(m-1)^2} 
\\ 
\end{pmatrix},
 $$
and hence is not zero. 

\subsection{Laplace transform}\label{SS:Laplace}

The main tool for the proof of Theorem \ref{Theorem:mpointsperiodic} is the following result. 

\begin{proposition}\label{Proposition:expansion}
Assume $\rmD(\bfs)\not=0$ and $\Delta(t)\not=0$ for $0<|t|<\tau$.
Then any entire function $f$ of exponential type $<\tau$ has an expansion of the form \eqref{Equation:Expansion}, where the series in the right hand side 
 is absolutely and uniformly convergent for $z$ on any compact  space in $\C$.
\end{proposition}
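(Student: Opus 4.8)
The plan is to apply the method of the kernel: represent $f$ by its Laplace (Borel) transform, substitute the kernel decomposition of Lemma \ref{Lemma:etz}, and expand each $\varphi_j(t,z)$ via \eqref{equation:varphij}. Write $f(z)=\sum_{k\ge0}(a_k/k!)z^k$ with $a_k=f^{(k)}(0)$. Since the exponential type of $f$ is $<\tau$, \eqref{eq:type} gives $\limsup_k|a_k|^{1/k}=\tau(f)$, so I may fix real numbers with $\tau(f)<\varrho<\varrho'<\tau$. The series $g(t)=\sum_{k\ge0}a_k t^{-k-1}$ then converges absolutely on the circle $|t|=\varrho$ and defines a function analytic for $|t|>\tau(f)$. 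Multiplying by $\rme^{tz}=\sum_{\ell\ge0}(tz)^\ell/\ell!$ and integrating term by term — legitimate because on $|t|=\varrho$ the resulting double series has absolute sum $\bigl(\sum_k|a_k|\varrho^{-k-1}\bigr)\rme^{\varrho|z|}<\infty$, and only the diagonal $\ell=k$ survives — one recovers the Pólya inversion formula
$$
f(z)=\frac1{2i\pi}\int_{|t|=\varrho}g(t)\,\rme^{tz}\,\rmd t ,
$$
and, differentiating under the (compact contour) integral,
$$
f^{(N)}(z)=\frac1{2i\pi}\int_{|t|=\varrho}g(t)\,t^{N}\rme^{tz}\,\rmd t\qquad(N\ge 0,\ z\in\C).
$$

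Next I would extend the kernel identity of Lemma \ref{Lemma:etz} to the integration circle $|t|=\varrho$. For fixed $z$, the right-hand side $\sum_{j=0}^{m-1}\rme^{ts_j}\varphi_j(t,z)$ is analytic in $t$ throughout $|t|<\tau$: indeed each $\varphi_j(t,z)$ has been analytically continued above to this disc (using $\Delta(t)\not=0$ for $0<|t|<\tau$), with \eqref{equation:varphij} still valid there. Since this analytic function coincides with the entire function $\rme^{tz}$ on $|t|<1/\Theta$, it coincides with it on all of $|t|<\tau$, in particular on $|t|=\varrho$. Substituting into the inversion formula and inserting \eqref{equation:varphij},
$$
f(z)=\frac1{2i\pi}\int_{|t|=\varrho}g(t)\sum_{j=0}^{m-1}\rme^{ts_j}\sum_{n\ge 0}t^{mn+r_j}\Lambda_{nj}(z)\,\rmd t .
$$
The interchange of $\sum_{n\ge0}$ with the integral is where Lemma \ref{Lemme:majorationLambda} enters. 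Applying it with radius $\varrho'$, for $z$ in a fixed compact set $K\subset\C$ one has $|\Lambda_{nj}(z)|\le(\varrho')^{-mn-r_j}C_j$, where $C_j=\sup_{|t|=\varrho',\,z\in K}|\varphi_j(t,z)|<\infty$; hence on $|t|=\varrho$ the term $t^{mn+r_j}\Lambda_{nj}(z)$ is bounded by $(\varrho/\varrho')^{mn+r_j}C_j$, the general term of a convergent series independent of $t$ and of $z\in K$. As $g$ is bounded on $|t|=\varrho$, Tonelli's theorem justifies the term-by-term integration and yields
$$
f(z)=\sum_{j=0}^{m-1}\sum_{n\ge 0}\Bigl(\frac1{2i\pi}\int_{|t|=\varrho}g(t)\,t^{mn+r_j}\rme^{ts_j}\,\rmd t\Bigr)\Lambda_{nj}(z).
$$
By the differentiated inversion formula the bracketed coefficient is exactly $f^{(mn+r_j)}(s_j)$, so this is \eqref{Equation:Expansion}. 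For the asserted convergence it then suffices to note, using \eqref{eq:type} at the point $s_j$, that $|f^{(mn+r_j)}(s_j)|\le\varrho^{mn+r_j}$ for all large $n$; combined once more with Lemma \ref{Lemme:majorationLambda} at radius $\varrho'$, the general term of the double series is at most a constant times $(\varrho/\varrho')^{mn+r_j}$ uniformly for $z\in K$, whence absolute and uniform convergence on every compact subset of $\C$.

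The only genuine difficulty is organizational: one must keep the chain $\tau(f)<\varrho<\varrho'<\tau$ consistent so that, simultaneously, $g$ is analytic on the integration circle $|t|=\varrho$, the kernel identity of Lemma \ref{Lemma:etz} is valid there, and the expansion \eqref{equation:varphij} — estimated at the slightly larger radius $\varrho'$ — converges fast enough to license the interchange of summation and integration. Everything else is a routine application of the lemmas already established in this section.
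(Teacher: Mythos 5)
Your proof is correct and follows essentially the same route as the paper: represent $f$ by its Laplace transform, expand the kernel $\rme^{tz}$ via Lemma \ref{Lemma:etz} and \eqref{equation:varphij}, and interchange sum and integral using Lemma \ref{Lemme:majorationLambda}. Your use of two radii $\varrho<\varrho'$ to get the geometric factor $(\varrho/\varrho')^{mn+r_j}$ makes explicit the justification that the paper leaves implicit, but it is the same argument.
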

 As a consequence: 
 
\begin{corollary} \label{Corollary:PeriodicZeroes} 
Under the assumptions of Proposition \ref{Proposition:expansion}, if an entire function $f$  has exponential type $<\tau$ and satisfies 
$$
f^{(mn+r_j)}(s_j)=0 \text{ for } j=0,\dots,m-1 \text{ and all sufficiently large } n,
$$
then $f$ is a polynomial. 
\end{corollary}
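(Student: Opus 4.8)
The plan is to read this off Proposition~\ref{Proposition:expansion} with almost no extra work. Since $f$ is an entire function of exponential type $<\tau$ and the standing hypotheses $\rmD(\bfs)\neq 0$ and $\Delta(t)\neq 0$ for $0<|t|<\tau$ are in force, that proposition furnishes the expansion
\[
f(z)=\sum_{n\ge 0}\sum_{j=0}^{m-1} f^{(mn+r_j)}(s_j)\,\Lambda_{nj}(z),
\]
an identity valid for every $z\in\C$, the series being absolutely and uniformly convergent on every compact subset of $\C$. So the first step is simply to record this expansion for the given $f$.

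Next I would feed in the hypothesis. By assumption there is an integer $N$ such that $f^{(mn+r_j)}(s_j)=0$ for all $n\ge N$ and all $j\in\{0,1,\dots,m-1\}$. Substituting into the expansion annihilates every term with index $n\ge N$, so that
\[
f(z)=\sum_{n=0}^{N-1}\sum_{j=0}^{m-1} f^{(mn+r_j)}(s_j)\,\Lambda_{nj}(z).
\]
The right-hand side is now a finite linear combination of the polynomials $\Lambda_{nj}$; since by Proposition~\ref{Prop:InterpolationPolynomials} each $\Lambda_{nj}$ is a polynomial of degree $\le mn+m-1$, the sum is a polynomial of degree $\le mN-1$. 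Hence $f$ is a polynomial (of degree at most $mN-1$), which is the assertion.

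There is essentially no obstacle at this level: the only point that needs a moment's care is that the displayed expansion is a genuine identity between entire functions rather than a merely formal relation, and this is precisely what the uniform convergence on compacta asserted in Proposition~\ref{Proposition:expansion} guarantees (so that, in particular, truncating a convergent series of polynomials at a finite stage yields exactly $f$). The substantive work lies upstream, in establishing Proposition~\ref{Proposition:expansion} itself — notably in controlling the growth of the $\Lambda_{nj}$ through the contour-integral/Laplace-transform representation of Lemma~\ref{Lemme:majorationLambda} together with the analytic continuation of $\varphi_j(t,z)$ to the disc $|t|<\tau$ — but that is not part of the present statement.
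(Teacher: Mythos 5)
Your proof is correct and is exactly the argument the paper intends: the corollary is stated there simply as a consequence of Proposition~\ref{Proposition:expansion}, the point being that once all but finitely many coefficients $f^{(mn+r_j)}(s_j)$ vanish, the convergent expansion \eqref{Equation:Expansion} reduces to a finite sum of the polynomials $\Lambda_{nj}$, so $f$ is a polynomial. Your degree bound $mN-1$ via Proposition~\ref{Prop:InterpolationPolynomials} is a correct (if unneeded) refinement.
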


The bound for the exponential type is sharp:  if $\alpha\not=0$ is a zero of $\Delta$, then there exists a transcendental entire function of exponential type $|\alpha|$ satisfying the vanishing conditions of Corollary \ref{Corollary:PeriodicZeroes}; for the proof, see Proposition 9(a) of  \cite{MW-Lidstone2points}.

The strategy for the proof of Proposition \ref{Proposition:expansion} will be to check that for $|t|<\tau$, the function $f_t(z)=\rme^{tz}$ admits the expansion \eqref{Equation:Expansion}, and then to deduce the general case by means of the Laplace transform, which is called the method of the kernel expansion in
\cite{MR0072947}, 
\cite[Chap.~I \S 3]{MR0162914}, 
 \cite[\S 1]{MR0059366}. 

We have $f_t^{(m)}=t^m f_t$ and 
$$
f_t^{(r_j)}(s_j)=t^{r_j}\rme^{ts_j}.
$$

 \begin{proof}[Proof of Proposition \ref{Proposition:expansion}]
Let 
$$
f(z)=\sum_{n\ge 0} \frac{a_n}{n!} z^n
$$ 
be an entire function of exponential type $\tau(f)$. Using \eqref{eq:type}, we deduce that the Laplace transform of $f$, 
$$
F(t)=\sum_{n\ge 0} a_nt^{-n-1}, 
$$
is analytic in the domain $|t|>\tau(f)$. From Cauchy's residue Theorem, it follows that for $\varrho>\tau(f)$ we have
$$
f(z) =\frac{1}{2\pi i} \int_{|t|=\varrho} \rme^{t z}F(t) \rmd t.
$$ 
Hence 
$$
f^{(mn+r_j)}(z) =\frac{1}{2\pi i} \int_{|t|=\varrho} t^{mn+r_j}\rme^{t z}F(t) \rmd t.
$$
Assume $\tau(f)<\tau$. Let $\varrho$ satisfy $ \tau(f)<\varrho<\tau$. We deduce from \eqref{equation:varphij} (which is valid for $|t|<\tau$) and Lemma \ref{Lemma:etz} that, for $|t|=\varrho$, we have
$$
\rme^{tz}=\sum_{j=0}^{m-1} \rme^{ts_j}\varphi_j(t,z)=
\sum_{n\ge 0} \sum_{j=0}^{m-1} \rme^{ts_j}t^{mn+r_j}\Lambda_{nj}(z),
$$
which is the expansion \eqref{Equation:Expansion} for the function $f_t(z)=\rme^{tz}$.

We now use Lemma \ref{Lemme:majorationLambda} and permute the integral and the series to deduce
$$
f(z) = 
\sum_{n\ge 0} \sum_{j=0}^{m-1} 
\left( \frac{1}{2\pi i} \int_{|t|=\varrho} t^{mn+r_j}\rme^{t s_j}F(t) \rmd t\right)\Lambda_{nj}(z)
=\sum_{n\ge 0} f^{(mn+r_j)}(s_j)\Lambda_{nj}(z).
$$
Using again Lemma \ref{Lemme:majorationLambda} together with  \eqref{eq:type}, we check that  the last series is absolutely and uniformly convergent for $z$ on any compact space in $\C$. 
\end{proof}

\begin{proof}[Proof of Theorem \ref{Theorem:mpointsperiodic}]
Let $f$ be an entire function 
satisfying the assumptions of Theorem \ref{Theorem:mpointsperiodic}.
From the assumption \eqref{eq:maingrowthconditionmpoints} and Proposition \ref{Proposition:Polya}, we deduce that for $n$ sufficiently large, we have 
$$
f^{(mn+r_j)}(s_j)=0\text{ for } j=0,\dots,m-1.
$$
Since the exponential type of $f$ is $<\tau$, we deduce from Corollary \ref{Corollary:PeriodicZeroes} that $f$ is a polynomial. 
\end{proof}

\section{Sequence of derivatives at several points} 
\label{S:ComplementarySequences}

 Given a sequence $\bfw=(w_n)_{n\ge 0}$ of complex numbers, we investigate the entire functions $f$ such that the numbers $f^{(n)}(w_n)$ are in $\Z$. Under suitable assumptions, we reduce this question to the case where these numbers all vanish. 
 
\subsection{Abel--Gontcharoff interpolation}
 
We start with any sequence $\bfw=(w_n)_{n\ge 0}$ of complex numbers. Following
 \cite{zbMATH02563461} 
 (see also \cite{MR0069937}, 
\cite{MR1912171}), 
we define a sequence of polynomials $(\Omega_{w_0,w_1,\dots,w_{n-1}})_{n\ge 0}$ in $\C[z]$ 
as follows: we set $\Omega_{\emptyset}=1$, $\Omega_{w_0}(z)=z-w_0$, and, for $n\ge 1$, we define $\Omega_{w_0,w_1,w_2,\dots,w_n}(z)$ as the polynomial of degree $n+1$ which is the primitive of $\Omega_{w_1,w_2,\dots,w_n}$ vanishing at $w_0$. 
For $n\ge 0$, we write $\Omega_{n;\bfw}$ for $\Omega_{w_0,w_1,\dots,w_{n-1}}$, a polynomial of degree $n$ which depends only on the first $n$ terms of the sequence $\bfw$. 
The leading term of $\Omega_{n;\bfw}$ is $  (1/{n!}) z^n$. 
An equivalent definition is
$$
\Omega_{n;\bfw}^{(k)}(w_k)=\delta_{kn}
$$
for $n\ge 0$ and $k\ge 0$. As a consequence, any polynomial $P$ can be written as a finite sum
$$
P(z)=\sum_{n\ge 0} P^{(n)}(w_n)\Omega_{n;\bfw}(z).
$$ 
In particular, for $N\ge 0$ we have 
$$
\frac{z^N}{N!}=
\sum_{n= 0}^N \frac{1}{(N-n)!} w_n^{N-n}
\Omega_{n;\bfw}(z).
$$
This gives an inductive formula defining 
$\Omega_{N;\bfw}$: for $N\ge 0$,
\begin{equation}\label{Equation:inductionformulainterpolation}
\Omega_{N;\bfw}(z)=\frac{z^N}{N!}-
\sum_{n= 0}^{N-1} \frac{1}{(N-n)!} w_n^{N-n}
\Omega_{n;\bfw}(z).
\end{equation}
We also have 
$$
\Omega_{w_0,w_1,\dots,w_n}(z)=\Omega_{0,w_1-w_0,w_2-w_0,\dots,w_n-w_0}(z-w_0).
$$
With $w_0=0$, the first polynomials are given by
\begin{align}
\notag
2!\Omega_{0,w_1}(z)&= (z-w_1)^2-w_1^2, \\
\notag
3!\Omega_{0,w_1,w_2}(z)&=(z-w_2)^3-3(w_1-w_2)^2z+w_2^3,
\\
\notag
4!\Omega_{0,w_1,w_2,w_3}(z)&=(z-w_3)^4-6(w_2-w_3)^2(z-w_1)^2
\\
\notag
& 
\quad -4(w_1-w_3)^3z+6w_1^2(w_2-w_3)^2- w_3^4. 
\end{align} 
From the definition we deduce the following formula, involving iterated integrals
$$
\Omega_{w_0,w_1,\dots,w_{n-1}}(z)=\int_{w_0}^z\rmd t_1 \int_{w_1}^{t_1}\rmd t_2 \cdots \int_{w_{n-1}}^{t_{n-1}}\rmd t_n. 
$$
These polynomials are also given by a determinant \cite[p.~7]{zbMATH02563461} 
$$
\Omega_{w_0,w_1,\dots,w_{n-1}}(z)= 
(-1)^n
\left|
\begin{matrix}
1 &\displaystyle \frac{z}{1!} &\displaystyle\frac{z^2}{2!} & \cdots &\displaystyle \frac{z^{n-1}}{(n-1)!} &\displaystyle\frac{z^n}{n!} \\
1 &\displaystyle \frac{w_0}{1!} &\displaystyle\frac{w_0^2}{2!} & \cdots &\displaystyle \frac{w_0^{n-1}}{(n-1)!} &\displaystyle\frac{w_0^n}{n!} \\
0 &\displaystyle1&\displaystyle \frac{w_1}{1!} &\cdots &\displaystyle \frac{w_1^{n-2}}{(n-2)!} &\displaystyle\frac{w_1^{n-1}}{(n-1)!} \\
0 &0&1& \cdots &\displaystyle \frac{w_2^{n-3}}{(n-3)!} &\displaystyle \frac{w_2^{n-2}}{(n-2)!} \\
\vdots&\vdots&\vdots& \ddots &\vdots& \vdots \\
0 &0&0& \cdots & 1&\displaystyle \frac{w_{n-1} }{1!} \\
\end{matrix}
\right|.
$$ 
With the sequence $\bfw=(1,0,1,0,\dots,0,1,\dots)$, we recover the Whittaker polynomials \cite[\S 5]{MW-Lidstone2points} 
$$
\Omega_{2n;\bfw}(z)=M_n(z), \quad \Omega_{2n+1,\bfw}(z)=M'_{n+1}(z-1).
$$
Another example, considered by N.~Abel
(see \cite{zbMATH02705364}, 
 \cite[p.~7]{zbMATH02563461}, 
 \cite[\S 7]{MR0029985}), 
is the arithmetic progression $\bfw=(a+nt)_{n\ge 0}$ with $a$ in $\C$ and $t$ in $\C\setminus\{0\}$, where
$$
\Omega_{n;\bfw}(z)=\frac 1 {n!} (z-a)(z-a-nt)^{n-1}
$$
for $n\ge 1$, which satisfies
$$
\Omega'_{n;\bfw}(z)=\Omega_{n-1;\bfw}(z-t).
$$
Theorem III in \cite[p.~29]{zbMATH02563461} 
gives sufficient conditions on the sequence $(w_n)_{n\ge 0}$ so that an entire function $f$ satisfying some growth condition has an expansion 
$$
f(z)=\sum_{n\ge 0} f^{(n)}(w_n) \Omega_{n;\bfw}(z).
$$
In the case that we are going to consider where the sequence $(|w_n|)_{n\ge 0}$ is bounded, say $|w_n-w_0|\le r$, the condition 
\cite[(31') p.~33]{zbMATH02563461} 
reduces to $\tau< 1/(\rme r)$. See also 
\cite[\S 10]{zbMATH02543645} 
for an improvement in the case $m=2$. 

From now on we assume that the sequence $(|w_n|)_{n\ge 0}$ is bounded. Let $A>\sup_{n\ge 0} |w_n|$. 

\begin{proposition}\label{Proposition:borneOmega} 
Let $\kappa>1/\log 2$. For $n$ sufficiently large, we have, for all $r\ge |A|$,
$$
|\Omega_{n;\bfw}|_r\le (\kappa r)^n.
$$ 
\end{proposition}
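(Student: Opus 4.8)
The plan is to extract from the inductive formula \eqref{Equation:inductionformulainterpolation} a scalar recursion for majorants of $|\Omega_{n;\bfw}|_r$, and then to control that recursion with a generating series. First I would introduce nonnegative reals $c_n$ defined by $c_0=1$ and $c_N=\frac{1}{N!}+\sum_{n=0}^{N-1}\frac{c_n}{(N-n)!}$ for $N\ge1$, and prove by induction on $n$ that $|\Omega_{n;\bfw}|_r\le c_n r^n$ for all $r\ge A$. The case $n=0$ is immediate since $\Omega_{0;\bfw}=1$; for the inductive step one evaluates \eqref{Equation:inductionformulainterpolation} at $|z|=r\ge A$, applies the triangle inequality, and uses the inductive hypothesis together with $|w_n|\le A\le r$ to bound each term $\frac{|w_n|^{N-n}}{(N-n)!}|\Omega_{n;\bfw}(z)|$ by $\frac{r^{N-n}}{(N-n)!}c_n r^n$. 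Keeping the majorant homogeneous of degree $n$ in $r$ and uniform over $r\ge A$ is precisely what lets the induction close.

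Next I would determine the rate of growth of the $c_n$. Setting $C(t)=\sum_{n\ge0}c_n t^n$ and using $\sum_{k\ge1}t^k/k!=\rme^t-1$, the recursion becomes $C(t)=\rme^t+C(t)(\rme^t-1)$, whence $C(t)=\rme^t/(2-\rme^t)$. This function is meromorphic on $\C$ with poles exactly at the solutions of $\rme^t=2$, the nearest to the origin being $t=\log2$; hence $C$ is holomorphic on the disc $|t|<\log2$, and Cauchy's inequalities (or the root test) give $\limsup_{n\to\infty}c_n^{1/n}=1/\log2$. Therefore, for any $\kappa>1/\log2$ one has $c_n\le\kappa^n$ for all sufficiently large $n$; combined with $|\Omega_{n;\bfw}|_r\le c_n r^n$ this yields $|\Omega_{n;\bfw}|_r\le(\kappa r)^n$ for $r\ge A$ and $n$ large, which is Proposition \ref{Proposition:borneOmega}.

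The one genuinely delicate point is obtaining the sharp constant $1/\log2$. A direct estimate of the iterated-integral representation of $\Omega_{n;\bfw}$, or a cruder use of \eqref{Equation:inductionformulainterpolation}, only produces $|\Omega_{n;\bfw}|_r\le(2r)^n$, and since $2>1/\log2\approx1.44$ this falls short of what is needed afterwards (the exponential type threshold $\log2/A$ appearing in Theorem \ref{Th:mPointsAlln}); one really must track the full convolution structure of the recursion, which is exactly what is encoded by the generating function $\rme^t/(2-\rme^t)$. The remaining verifications — that the $c_n$ are well defined and nonnegative, and that passing from a pointwise bound on $\{|z|=r\}$ to $|\Omega_{n;\bfw}|_r$ is legitimate — are routine.
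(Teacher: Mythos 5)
Your proof is correct and follows essentially the same route as the paper: the identical scalar recursion $c_0=1$, $c_N=\frac{1}{N!}+\sum_{n=0}^{N-1}\frac{c_n}{(N-n)!}$, and the identical induction (using $|w_n|\le A\le r$) giving $|\Omega_{n;\bfw}|_r\le c_n r^n$. The only divergence is the final step: the paper bounds $c_n\le C\kappa_1^n$ for $1/\log 2<\kappa_1<\kappa$ by a direct induction with an explicit constant $C=\bigl(2-\rme^{1/\kappa_1}\bigr)^{-1}\max_{n\ge 0}\frac{1}{\kappa_1^n n!}$, whereas you read off $\limsup_{n\to\infty} c_n^{1/n}=1/\log 2$ from the generating function $\rme^t/(2-\rme^t)$ --- an equivalent (and arguably more transparent) way of locating the same threshold $\log 2$.
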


\begin{proof}
Let $c_0,c_1,c_2,\dots$ be the sequence of positive numbers defined by induction as follows:
$c_0=1$ and, for $n\ge 1$, 
$$
c_n=\frac{1}{n!}+\frac{c_0}{n!}+\frac{c_1}{(n-1)!}+\cdots+\frac{c_{n-2}}{2!}+c_{n-1}.
$$
From \eqref{Equation:inductionformulainterpolation} we deduce by induction, for $|z|\le r$ and all $n\ge 0$, 
$$
|\Omega_{n;\bfw}(z)|\le c_n r^n.
$$
Let $\kappa_1$ satisfy $ 1 /\log 2<\kappa_1<\kappa$ and let $A>0$ satisfy
$$
A\ge \left( 2-\rme^{\frac 1 {\kappa_1}}\right)^{-1} \max_{n\ge 0} \frac{1}{\kappa_1^n n!} \cdotp 
$$
One checks by induction $c_n \le A \kappa_1^n$ for all $n\ge 0$ thanks to the upper bound
$$
\frac 1 {n!} +A \kappa_1^n\left (\rme^{\frac 1 {\kappa_1}}-1\right)
\le A \kappa_1^n.
$$ 
Therefore,  for sufficiently large $n$, we have $c_n<\kappa^n$.
\end{proof}

In the case $m=2$ and $w_n\in\{0,1\}$ for all $n\ge 0$, 
a sharper estimate has been achieved in \cite[\S 10]{zbMATH02543645}, 
namely 
$$
|\Omega_{n;\bfw}(z)|\le \frac 1 2 \rme^2\left( \frac 1 2 + R\right)^n
$$
for $|z-\frac 12|=R$. The proof relies on explicit formulae for the polynomials $\Omega_{n;\bfw}(z)$.

From Proposition \ref{Proposition:borneOmega} we deduce the following interpolation formula:

\begin{proposition}\label{Proposition:InterpolationSequence}
Let $f$ be an entire function of exponential type $\tau(f)$ satisfying 
$$
\tau(f)<\frac{\log 2}{A}\cdotp
$$
Let $r$ be a real number in the range
$$
A\le r<\frac{\log 2}{\tau(f)}\cdotp
$$
Then 
$$
f(z)=\sum_{n\ge 0} f^{(n)}(w_n)\Omega_{n;\bfw}(z),
$$ 
where the series on the right hand side is absolutely and uniformly convergent in the disk $|z|\le r$.
\end{proposition}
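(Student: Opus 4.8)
The plan is to obtain the formula directly from the finite monomial expansion $z^N/N!=\sum_{n=0}^N \frac{1}{(N-n)!}w_n^{N-n}\Omega_{n;\bfw}(z)$ recorded above, combined with the growth bound of Proposition~\ref{Proposition:borneOmega}; this is a lightweight form of the kernel method that bypasses the Laplace transform. First I fix the parameters. The hypothesis $r<\log 2/\tau(f)$ says $r\tau(f)<\log 2$, so I may choose $\kappa>1/\log 2$ with $\kappa r\tau(f)<1$ and then $\varepsilon>0$ with $(\tau(f)+\varepsilon)\kappa r<1$. Proposition~\ref{Proposition:borneOmega} applies since $r\ge A$, and as $\Omega_{0;\bfw},\dots,\Omega_{n_0-1;\bfw}$ are finitely many fixed polynomials there is a constant $C$ with $|\Omega_{n;\bfw}|_r\le C(\kappa r)^n$ for every $n\ge 0$. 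Writing $f(z)=\sum_{N\ge 0}(a_N/N!)z^N$, identity~\eqref{eq:type} at $z_0=0$ gives $\limsup_N|a_N|^{1/N}=\tau(f)$, hence $|a_N|\le(\tau(f)+\varepsilon)^N$ for all large $N$.

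Next I substitute the monomial expansion into the Taylor series of $f$, obtaining formally $f(z)=\sum_{N\ge 0}a_N\sum_{n=0}^N\frac{w_n^{N-n}}{(N-n)!}\Omega_{n;\bfw}(z)$, and interchange the two summations by absolute convergence on $|z|\le r$: using $|w_n|<A\le r$ and $|\Omega_{n;\bfw}(z)|\le C(\kappa r)^n$, the inner majorant is $(\kappa r)^N\sum_{k=0}^N\frac{(A/(\kappa r))^k}{k!}\le \rme^{A/(\kappa r)}(\kappa r)^N$, so the whole majorant is at most $C\rme^{A/(\kappa r)}\sum_N|a_N|(\kappa r)^N<\infty$ because $(\tau(f)+\varepsilon)\kappa r<1$. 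Fubini then lets me reorganize the sum by powers of $\Omega_{n;\bfw}$; the coefficient of $\Omega_{n;\bfw}(z)$ is $\sum_{N\ge n}\frac{a_Nw_n^{N-n}}{(N-n)!}$, which is exactly the value at $w_n$ of $f^{(n)}(z)=\sum_{N\ge n}\frac{a_N}{(N-n)!}z^{N-n}$. This produces the identity $f(z)=\sum_{n\ge 0}f^{(n)}(w_n)\Omega_{n;\bfw}(z)$.

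Finally, for the asserted absolute and uniform convergence on $|z|\le r$, I bound $|f^{(n)}(w_n)|$ by Cauchy's inequality~\eqref{Equation:CauchyInequality} with $|w_n|<A$, i.e.\ $|f^{(n)}(w_n)|\le(n!/\sigma^n)\,|f|_{\sigma+A}$, together with $|f|_R\le c_\varepsilon\rme^{(\tau(f)+\varepsilon/2)R}$, Stirling's formula~\eqref{Equation:Stirling}, and the choice $\sigma=n/(\tau(f)+\varepsilon/2)$; this yields $|f^{(n)}(w_n)|\le C'(\tau(f)+\varepsilon)^n$ for large $n$, so the general term $f^{(n)}(w_n)\Omega_{n;\bfw}(z)$ is $O\bigl(((\tau(f)+\varepsilon)\kappa r)^n\bigr)$ on $|z|\le r$, which is summable. (The same estimate, and even the identity itself, also fall out of the representation $f(z)=\frac1{2\pi i}\int_{|t|=\varrho}\rme^{tz}F(t)\,\rmd t$ for $\tau(f)<\varrho<1/(\kappa r)$ after inserting the kernel identity $\rme^{tz}=\sum_n t^n\rme^{tw_n}\Omega_{n;\bfw}(z)$, obtained by summing the monomial expansion against $t^N$.) The only slightly delicate point is the bookkeeping that makes $1/\log 2<\kappa$, $\kappa r\tau(f)<1$ and $(\tau(f)+\varepsilon)\kappa r<1$ simultaneously attainable, which is precisely where the hypothesis $r<\log 2/\tau(f)$ enters; the degenerate case $\tau(f)=0$ is harmless, since then $|a_N|$ decays faster than any geometric sequence and every estimate above is trivial.
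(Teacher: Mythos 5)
Your proof is correct and follows essentially the same route as the paper: expand $f$ in its Taylor series, insert the monomial identity $z^N/N!=\sum_{n=0}^N\frac{w_n^{N-n}}{(N-n)!}\Omega_{n;\bfw}(z)$, and justify the interchange of summations via the majorant $c\,\rme^{A/(\kappa r)}(\tau\kappa r)^N$ built from Proposition~\ref{Proposition:borneOmega}. Your extra care with the constant $C$ for small $n$ and the closing Cauchy--Stirling bound on $|f^{(n)}(w_n)|$ are harmless refinements (the latter is already implied by the absolute convergence of the double series), not a different method.
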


\begin{proof}
Let $\kappa$ and $\tau$ satisfy 
$$
\kappa>\frac 1 {\log 2}, \quad
\tau(f)<\tau<\frac{1}{\kappa r}\cdotp
$$
Write the Taylor expansion of $f$ at the origin:
$$
f(z)=\sum_{N\ge 0} a_N \frac{z^N}{N!}\cdotp
$$
From \eqref{eq:type} we deduce that there exists a constant $c>0$ such that, for all $N\ge 0$, we have $|a_N|\le c \tau^N$. For $|z|\le r$, we have 
$$
|a_N| \sum_{n= 0}^N \left| \frac{1}{(N-n)!} w_n^{N-n}
\Omega_{n;\bfw}(z)\right|\le 
c\tau^N\sum_{n=0}^N \frac{A^{N-n}(\kappa r)^n}{(N-n)!}
\le c \rme^{A/\kappa r} (\tau\kappa r)^N ,
$$
which is the general term of a convergent series, since $\tau\kappa r<1$. Hence 
\begin{align}
\notag
f(z)&=\sum_{N\ge 0} a_N \sum_{n= 0}^N \frac{1}{(N-n)!} w_n^{N-n}
\Omega_{n;\bfw}(z)
\\
\notag
&= \sum_{n\ge 0} 
\Omega_{n;\bfw}(z)\sum_{N\ge n} a_N \frac{1}{(N-n)!} w_n^{N-n}
= \sum_{n\ge 0} 
\Omega_{n;\bfw}(z)f^{(n)}(w_n).
\end{align}
\end{proof}

\begin{remark}{\rm
Notice that here the expansions are valid in a bounded domain of $\C$, not in the entire complex plane as in Section \ref{SS:Laplace} for instance.
}
\end{remark}

\begin{corollary}\label{Corollary:fnwn=0}
If an entire function $f$ of exponential type $\tau(f)< \log 2/A$ satisfies 
$f^{(n)}(w_n)=0$ for all sufficiently large $n$, then $f$ is a polynomial. 
\end{corollary}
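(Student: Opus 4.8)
The plan is to read the statement off directly from the interpolation formula of Proposition~\ref{Proposition:InterpolationSequence}. Since $\tau(f)<\log 2/A$, the interval $[A,\log 2/\tau(f))$ is nonempty (it is $[A,+\infty)$ when $\tau(f)=0$), so I may fix a real number $r$ with $A\le r<\log 2/\tau(f)$. Proposition~\ref{Proposition:InterpolationSequence} then applies to $f$ and this choice of $r$, giving
$$
f(z)=\sum_{n\ge 0} f^{(n)}(w_n)\,\Omega_{n;\bfw}(z),
$$
the series being absolutely and uniformly convergent on the disc $|z|\le r$.

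Next I would use the vanishing hypothesis. Let $n_0$ be such that $f^{(n)}(w_n)=0$ for all $n\ge n_0$. Then every term of the above series with index $n\ge n_0$ is identically zero, so the expansion collapses to the finite sum
$$
P(z):=\sum_{n=0}^{n_0-1} f^{(n)}(w_n)\,\Omega_{n;\bfw}(z),
$$
which, since $\Omega_{n;\bfw}$ has degree $n$, is a polynomial of degree $\le n_0-1$. We have thus shown $f(z)=P(z)$ for every $z$ in the disc $|z|\le r$. Finally, $f$ and $P$ are both entire and agree on a disc of positive radius $r>0$, hence on all of $\C$ by the identity theorem; therefore $f=P$ is a polynomial.

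There is essentially no obstacle here once Proposition~\ref{Proposition:InterpolationSequence} is granted; the only point requiring care is that the interpolation expansion is a priori valid only on the bounded disc $|z|\le r$, not on all of $\C$ (as noted in the Remark preceding the statement). So one genuinely needs the analytic-continuation step to pass from ``$f$ coincides with a fixed polynomial on a disc'' to ``$f$ is that polynomial'' — it does not suffice merely to observe that the right-hand side becomes a finite sum.
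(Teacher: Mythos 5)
Your proof is correct and is exactly the derivation the paper intends: it states Corollary~\ref{Corollary:fnwn=0} as an immediate consequence of Proposition~\ref{Proposition:InterpolationSequence} (and notes it is a special case of a theorem of Takenaka), the point being that the expansion reduces to a finite sum of the polynomials $\Omega_{n;\bfw}$. You are also right to flag the identity-theorem step, since the expansion is only guaranteed on the disc $|z|\le r$; this is the one detail worth making explicit, and you have done so.
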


Replacing $z$ by $Az$, one may assume $A=1$, and then Corollary \ref{Corollary:fnwn=0} is 
\cite[Theorem 8]{zbMATH02532117}, 
a special case of one of Takenaka's theorems. 

In the special case where the set $\{w_0,w_1,w_2,\dots\}$ is finite, say $S=\{s_0,s_1,\dots,s_{m-1}\}$ with 
$$
\max \{|s_0|,|s_1|,\dots,|s_{m-1}|\}<A, 
$$
Corollary \ref{Corollary:fnwn=0} reduces to the following statement: 

\begin{corollary}\label{Corollary:produitfnsj=0}
If an entire function $f$ of exponential type $\tau(f)< \log 2/A$ satisfies 
$$
\prod_{j=0}^{m-1}f^{(n)}(s_j)=0
$$ 
for all sufficiently large $n$, then $f$ is a polynomial. 
\end{corollary}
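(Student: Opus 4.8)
The plan is to reduce Corollary \ref{Corollary:produitfnsj=0} to Corollary \ref{Corollary:fnwn=0} by a pigeonhole argument on the index $j$. First I would observe that a product of complex numbers vanishes exactly when one of the factors does: for each $n$ large enough that $\prod_{j=0}^{m-1}f^{(n)}(s_j)=0$, there is an index $j(n)\in\{0,1,\dots,m-1\}$ with $f^{(n)}(s_{j(n)})=0$.

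Next I would assemble a sequence $\bfw=(w_n)_{n\ge 0}$ by setting $w_n=s_{j(n)}$ once $n$ is large enough for the hypothesis to apply, and $w_n=s_0$ (say) for the finitely many smaller values of $n$. Every $w_n$ then lies in the finite set $\{s_0,s_1,\dots,s_{m-1}\}$, so the sequence $(|w_n|)_{n\ge 0}$ is bounded with $\sup_{n\ge 0}|w_n|\le \max_{0\le j\le m-1}|s_j|<A$; in particular the standing hypothesis $A>\sup_{n\ge 0}|w_n|$ under which Corollary \ref{Corollary:fnwn=0} is stated holds. By construction $f^{(n)}(w_n)=0$ for all sufficiently large $n$, and the assumed bound $\tau(f)<\log 2/A$ is precisely the one required by Corollary \ref{Corollary:fnwn=0}. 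Applying that corollary gives that $f$ is a polynomial.

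There is essentially no obstacle; the one point worth flagging is that the sequence $\bfw$ furnished by the pigeonhole step is in general not periodic, which is exactly why Proposition \ref{Proposition:InterpolationSequence} and Corollary \ref{Corollary:fnwn=0} were formulated for arbitrary bounded sequences rather than only for periodic ones.
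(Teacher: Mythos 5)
Your proposal is correct and is exactly the reduction the paper intends: the paper presents Corollary \ref{Corollary:produitfnsj=0} as the special case of Corollary \ref{Corollary:fnwn=0} where the sequence $\bfw$ takes values in the finite set $\{s_0,\dots,s_{m-1}\}$, leaving the pigeonhole step (a vanishing product has a vanishing factor, defining $w_n=s_{j(n)}$) implicit. Your write-up just makes that step explicit, so there is nothing to add.
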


\subsection{Sequence of elements in $S$} 

\begin{proof}[Proof of Theorem \ref{Th:mPointsAlln}]
Denote by $\tau(f)$ the exponential type of $f$. 
Since $f$ satisfies the hypothesis \eqref{eq:maingrowthconditionA} of Proposition \ref{Proposition:Polya}, for $n$ sufficiently large we have 
$|f^{(n)}(z)|<1$ for all $|z|<A$. 
 
Under the assumption (a) of Theorem \ref{Th:mPointsAlln}, for $n$ sufficiently large we have $f^{(n)}(w_n)=0$. Corollary \ref{Corollary:fnwn=0} implies that $f$ is a polynomial. 
 
For each sufficiently large  $n$, the product $\prod_{j\in I_n} f^{(n)}(s_j)$ is an integer of absolute value less than $1$, and hence it vanishes. 
Part (b) of 
Theorem \ref{Th:mPointsAlln} follows from Corollary \ref{Corollary:produitfnsj=0}.
\end{proof}

\section*{Acknowledgment}
A preliminary version of this paper was substantially improved thanks to a contribution by Damien Roy (see Section \ref{SS:ExponentialSums}).

 \vskip 1truecm plus .5truecm minus .5truecm 
 
\noindent
Received 30 November 2019.
Revised 1 May 2020.

 \vskip 1truecm plus .5truecm minus .5truecm 
  
 \noindent
\vbox{\hbox{\sc Michel WALDSCHMIDT:} 
\hbox{\url{michel.waldschmidt@imj-prg.fr}}
\hbox{Sorbonne Universit\'e, Faculté Sciences et Ingénierie} 
	\hbox{CNRS, Institut Mathématique de Jussieu Paris Rive Gauche IMJ-PRG, 75005 Paris, France }
	\hbox{Url: \url{http://www.imj-prg.fr/~michel.waldschmidt}}	
}	

\vfill\eject
\end{document}